\newtheorem{lem}{Lemma}[section]
\newtheorem{prop}[lem]{Proposition}
\newtheorem{thm}[lem]{Theorem}
\theoremstyle{definition}
\newtheorem{df}[lem]{Definition}
\newcommand{\DD}{{\mathfrak D}}
\def\z{{\sf{z}}}
\def\des{{\sf Dev}}
\def\cS{\mathcal{S}}
\def\cD{\mathcal{D}}
\def\C{\mathbb{C}}
\def\F{\mathbb{F}}
\def\O{\mathcal{O}}
\def\bE{{\bf J}}
\def\normaleq{\trianglelefteq}
\def\ovr{\overline}
\def\Z{\mathbb{Z}}
\def\sig{\sigma}
\def\veps{\varepsilon}
\DeclareMathOperator{\aut}{\sf Aut}
\DeclareMathOperator{\fix}{\sf Fix}
\DeclareMathOperator{\sym}{\sf Sym}
\newcommand{\sg}[1]{\langle {#1}\rangle}
\newcommand{\comment}[1]{}
\newcommand{\e}[1]{#1}
\def\bC{{\bf C}}
\def\bN{{\bf N}}
\def\irr{{\sf Irr}}
\newcommand{\und}[1]{{\underline #1}}
\newcommand{\New}[1]{{\it #1}}
\begin{document}
\title{On Skew Hadamard difference sets}
\author{Mikhail Muzychuk}
\address{Netanya Academic College, Netanya, Israel}
\email{muzy@netanya.ac.il}
\date{}
\maketitle
\begin{abstract} In this paper we construct exponentionally many non-isomorphic skew Hadamard difference sets over an elementary abelian group of order $q^3$.
\end{abstract}

\section{Introduction}

During last 5 years there was ongoing activity related to skew Hadamard difference sets (all necessary
definitions are given in the next section). Apart the classical family of such sets there are only few infinite
series built quite recently. We refer the reader to the papers \cite{xiang},\cite{dingyuan},\cite{feng}
where the topic is surveyed.

The initial point for this paper was Feng's construction of skew Hadamard difference sets \cite{feng}
over non-abelian groups of order $p^3$ (see the last section where the history of this paper is presented).
An analysis of Feng's construction shows that the automorphism group of the design
obtained from his difference set contains an elementary abelian regular subgroup of order $p^3$.
This means that this design could be constructed from a difference set over an
elementary abelian group of order $p^3$.
This gives a clue how to generalize the construction to elementary abelian groups of order
$q^3$. We take a finite field $\F_q,q\equiv\,3({\rm mod}\,4)$ and consider
the orbits of a certain subgroup $A\leq {\sf GL}_3(\F_q)$ (see Section 3 for definition of $A$)
on the vector space $\F_q^3$.
The first main result of the paper (Theorem \ref{SHfusions}) enumerates all $A$-invariant skew Hadamard difference sets. It turns out that the number of such sets is $4{q\choose\frac{q+1}{2}}$. The second main result (Theorem~\ref{main2}) of the paper solves isomorphism problem for designs generated by the constructed difference sets. It turns out that the number of the design isomorphism classes
is at least $2^{q+1}/q^4$.

Each design obtained in that way is invariant under the action of the group $V\rtimes A$
of order $q^4\frac{q-1}{2}$. If $p > 3$, then the group $V\rtimes A$ contains a non-abelian
subgroup isomorphic to $UT_3(q)$ acting regularly on the point set. Therefore in the case of
$p > 3$ each design may be also derived from a skew Hadamard difference set over the non-abelian
group $UT_3(q)$.

 \section{Preliminaries}

Let $H$ be a finite group written multiplicatively with unit element $1_H$.
We write $\Z[H]$ for the group algebra of $H$ over the integers.
Given an element $x = \sum_{h\in H} x_h h\in \Z[H]$, we write $x^{(-1)}$ for
the element $\sum_{h\in H} x_h h^{-1}$.
For a subset $D\subseteq H$ we set $D^{(-1)}:=\{d^{-1}\,|\,d\in D\}$ and
$\und{D}:=\sum_{d\in D} d\in\Z[H]$.

\subsection{Schur rings}
Given a partition $\cS$ of $H$, we denote by $\cS(h),h\in H$ a unique class of $\cS$ containing $h$.
\begin{df} A partition $\cS$ of $H$ is called a \New{Schur partition} iff it satisfies the following conditions
\item[(S1)] $\cS(1_H)=\{1_H\}$;
\item[(S2)] $\cS(h^{-1}) = \cS(h)^{(-1)}$ for each $h\in H$ ;
\item[(S3)] the free $\Z$-submodule $\Z[\und{\cS}]$ spanned by $\und{S},S\in\cS$ is a subalgebra of $\Z[H]$. This subalgebra is called a \New{Schur ring/algebra} spanned by $\cS$.
\end{df}
If $H$ is abelian, then each Schur partition $\cS$ defines a \New{dual} Schur partition $\cS^*$ over
the dual group $\irr(H)$.Two irreducible characters $\chi,\eta\in\irr(H)$ belong to the same class of $\cS^*$ if and only if
$\chi(\und{S})=\eta(\und{S})$ holds for each $S\in\cS$. A \New{character table} of $\cS$ describes all irreducible complex representations of $\C[\und{\cS}]$. It's rows  are parametrized by the sets of $\cS^*$ while the columns are parametrized by the sets of $\cS$. The entry of the table corresponding to a pair $R\in\cS^*$,$S\in\cS$ is equal to $\chi(\und{S})$ where $\chi\in R$.

If $\Phi$ is a group of automorphisms of $H$, then the orbits of $\Phi$ on $H$ always form a Schur partition.
If $H$ is abelian, then the dual Schur partition is formed by the orbits of a natural action of $\Phi$ on $\irr(H)$.

\subsection{Difference sets and Cayley designs} A subset $D$ is called a $(v,k,\lambda)$-\New{difference set} iff $v = |H|,k = |D|$ and the following equation is satisfied
\begin{equation}\label{difset}
\und{D}\,\und{D}^{(-1)} =  (k - \lambda) 1_H + \lambda \und{H}
\end{equation}
A difference set $D$ is called a \New{skew Hadamard difference set} (SHDS for short) if $H\setminus\{1_H\}=D\cup D^{(-1)}$ and $D\cap D^{(-1)}=\emptyset$.
It is not difficult to check that a subset $D\subseteq H$ is an SHDS iff the partition $\{1_H\},D,D^{(-1)}$ is a Schur partition of $H$. The parameters of an SHDS over $H$ are $(|H|,\frac{|H|-1}{2},\frac{|H|-3}{4})$.

Every $(v,k,\lambda)$ difference set $D\subset H$ produces a $2-(v,k,\lambda)$ symmetric design with point set $H$ and block set $\des(H,D) :=\{Dh\,|\,h\in H\}$. If the group $H$ is clear from the context, then we write just $\des(D)$ instead of $\des(H,D)$.

In general, the set $\des(D)$ could be built for any subset $D$ of $H$. In what follows we call the set $\des(D)$ \New{a Cayley design generated by} $D$. In the case when  $H$ is an abelian group written additively we call $\des(D)$ a \New{translation design}  generated by $D$.

 The \New{automorphism group} of a design $(H,\cD), \cD=\des(D)$, notation $\aut(\cD)$, consists of all permutations $g\in\sym(H)$ which permute the blocks $B\in\cD$. It always contains a subgroup $H_\ast:=\{h_\ast\,|\,h\in H\}$  where $h_\ast\in\sym(H)$ is a right translation \footnote{ In the case when $H$ is an abelian group written additively we write $h_+$ instead of $h_\ast$.} by $h\in H$ (that is $x^{h_\ast} = xh$). The group $H_\ast$ acts regularly on points and transitively on the blocks of the design
$(H,\cD)$. This property is characteristic for Cayley designs. More precisely, a simple design is isomorphic to a Cayley design $\des(H,D)$ iff the automorphism group of the design contains a subgroup isomorphic to $H$ which acts regularly on points and transitively on blocks. Below we collect some elementary properties of Cayley designs which will be used in the paper.

\begin{prop}\label{pointblock} Let $\cD:=\des(H,D)$ be a Cayley design s.t. $|\cD|=|H|$ and $G$ a subgroup of $\aut(\cD)$ which contains $H_*$.
Assume that a point stabilizer $G_p,p\in H$ fixes some block $B\in\cD$. Then
\begin{enumerate}
\item If $G_p$ fixes a point $q\in H$, then $G_p$ fixes also a block $Bp^{-1}q$;
\item If $G_p$ fixes a block $Bh,h\in H$, then $G_p$ also fixes a point $ph^{-1}$;
\item there exists $F\leq H$ such that
$$
\fix_H(G_p) = \{pf\,|\,f\in F\}, \fix_{\cD}(G_B) = \{Bf\,|\,f\in F\}.
$$
\end{enumerate}
\end{prop}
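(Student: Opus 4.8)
The plan is to reduce all three parts to a single equivalence about which translations normalize the point stabilizer, and the first step is to exploit a rigidity forced by the hypothesis $|\cD|=|H|$. Since $H_\ast\le G$ acts regularly on the points $H$, the group $G$ is point-transitive; since $(Dh)^{k_\ast}=D(hk)$, the subgroup $H_\ast$ acts (regularly, hence) transitively on $\cD$, so $G$ is block-transitive as well. Orbit--stabilizer then gives $|G|=|H|\,|G_p|=|\cD|\,|G_B|$, and because $|\cD|=|H|$ we get $|G_p|=|G_B|$. The hypothesis that $G_p$ fixes $B$ means $G_p\le G_B$, so equality of orders forces $G_p=G_B$. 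Identifying the point stabilizer with the block stabilizer is what makes the point/block symmetry of the statement work, and I expect it to carry the real content of the argument.

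The central tool is then the translation $k_\ast\in H_\ast$, which sends the point $p$ to $pk$ and the block $B$ to $Bk=B^{k_\ast}$. I would establish, for each $k\in H$, the equivalence
\[
k_\ast\in N_G(G_p)\ \Longleftrightarrow\ pk\in\fix_H(G_p)\ \Longleftrightarrow\ Bk\in\fix_{\cD}(G_p),
\]
by direct conjugation. For a fixed $g\in G_p$ one tracks $p\mapsto pk\mapsto pk\mapsto p$ under $k_\ast g k_\ast^{-1}$ (the middle step being the assumption that $pk$ is fixed), so $pk\in\fix_H(G_p)$ forces $k_\ast g k_\ast^{-1}\in G_p$ for every $g$, whence $k_\ast G_pk_\ast^{-1}\subseteq G_p$ and, comparing the orders of these two finite conjugate subgroups, $k_\ast$ normalizes $G_p$; the converse runs the same computation in reverse. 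The block equivalence is the identical computation with $B$ in place of $p$, except that conjugation there only yields $k_\ast G_pk_\ast^{-1}\subseteq G_B$, and it is exactly the identity $G_p=G_B$ from the first step that lets me upgrade this to $k_\ast\in N_G(G_p)$. This block-to-normalizer implication is the one genuinely delicate point, and it is where the hypothesis $|\cD|=|H|$ is indispensable.

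Finally I would put $F:=\{k\in H : k_\ast\in N_G(G_p)\}$. Since $k\mapsto k_\ast$ is an isomorphism $H\to H_\ast$ and $H_\ast\cap N_G(G_p)$ is a subgroup of $H_\ast$, the set $F$ is a subgroup of $H$. By the equivalence above, as $k$ ranges over $H$ the fixed points $pk$ of $G_p$ are precisely those with $k\in F$, so $\fix_H(G_p)=\{pf : f\in F\}$, and likewise $\fix_{\cD}(G_B)=\fix_{\cD}(G_p)=\{Bf : f\in F\}$; this gives part~(3). Parts~(1) and~(2) then drop out as special cases: if $q\in\fix_H(G_p)$ then $k:=p^{-1}q\in F$, so $Bp^{-1}q=Bk\in\fix_{\cD}(G_p)$, which is~(1); and if $Bh\in\fix_{\cD}(G_p)$ then $h\in F$, hence $h^{-1}\in F$ because $F$ is a subgroup, so $ph^{-1}\in\fix_H(G_p)$, which is~(2).
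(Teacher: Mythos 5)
Your proof is correct, and it reorganizes the argument in a genuinely different way from the paper, even though both rest on the same two pillars: the order count $|G_B|=|G|/|\cD|=|G|/|H|=|G_p|$ forcing $G_p=G_B$, and the subgroup $F$ corresponding to $\bN_G(G_p)\cap H_*$. The paper treats the three parts separately: part (1) is proved by a direct element computation using the factorization $G=G_pH_*=H_*G_p$ (writing $p_*^{-1}q_*g=g_1r_*$ with $g_1\in G_p$, $r\in H$, and comparing the images of $p$ and of $B$); part (2) by the conjugation $h_*G_ph_*^{-1}\leq G_B$ combined with the order count; and part (3) by quoting the well-known fact $\fix_H(G_p)=p^{\bN_G(G_p)}$ together with the factorization $\bN_G(G_p)=G_pF_*$. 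You instead prove a single three-way equivalence ($k_*$ normalizes $G_p$ iff $pk\in\fix_H(G_p)$ iff $Bk\in\fix_\cD(G_p)$), read off part (3) from it by setting $F:=\{k\in H : k_*\in\bN_G(G_p)\}$, and then deduce parts (1) and (2) as corollaries of (3) using that $F$ is a subgroup. Your route is more unified and self-contained: the ``well-known'' orbit fact is proved rather than cited, and its block analogue comes for free from $G_p=G_B$. What the paper's route buys in exchange is a bit of extra generality in part (1): its proof of that part never invokes $|\cD|=|H|$, so it is valid for any Cayley design with $H_*\leq G$ and a block-fixing point stabilizer, whereas in your scheme all three parts inherit the dependence on $G_p=G_B$, hence on the hypothesis $|\cD|=|H|$.
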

\begin{proof}
{\sc Part (1).} Let $g\in G_p$ be an arbitrary element. Then $(Bp^{-1}q)^g = Bh$ for some $h\in H$. Equivalently,
$B^{p_*^{-1}q_* g} = Bh$. Since $G_p H = H G_p$, there exist $g_1\in G_p$ and $r\in H$ such that
$p_*^{-1}q_* g = g_1 r_*$. This implies that $Bh = B^{g_1 r_*} = Br$. Also
$$
p^{p_*^{-1}q_* g} = p^{g_1 r_*}\implies q^g = pr \implies q = pr\implies r = p^{-1}q.
$$
Thus
$(Bp^{-1}q)^g = Bh = Br = Bp^{-1}q$.

{\sc Part (2).} It follows from $(Bh)^{G_p} = Bh$ that $h_* G_p h_*^{-1}\leq G_B$. By assumption
$G_p\leq G_B$. Since $|G_B| = |G|/|\cD| = |G|/|H|=|G_p|$, we obtain $G_p = G_B$. Thus
$h_* G_p h_*^{-1} = G_p$ implying
$$
(ph^{-1})^{G_p} = (ph^{-1})^{h_* G_p h_*^{-1}} = ph^{-1}.
$$

{\sc Part (3).} It is well-known that $\fix_H(G_p) = p^{{\bf N}_G(G_p)}$. It follows from
$G = G_p H_*$ that ${\bf N}_G(G_p) = G_p F_*$ for a uniquely determined subgroup $F\leq H$.
Now we obtain that
$$
\fix_H(G_p) = p^{{\bf N}_G(G_p)} = p^{G_p F_*} = pF.
$$
The second equality follows from $G_B=G_p$.\qed
\end{proof}

\section{The group $A$ and its orbits}
\newcommand{\clmn}[3]
{\left(
\begin{array}{c}
#1\\
#2\\
#3
\end{array}
\right)
}

For the rest of the paper it is assumed that $q=p^n$ is an odd power of a prime $p$ which is congruent $3$ modulo $4$.
Let $V = \F_q^3$. We write the elements of $V$ as column vectors $\clmn{x}{y}{z}$ and the elements of the dual space
$V^*$ are written as row vectors. For $v=(v_1,v_2,v_3)\in V^*, w=\clmn{w_1}{w_2}{w_3}\in V$ their product $v_1w_1 + v_2w_2 + v_3w_3$ is written as $vw$. For $w=\clmn{w_1}{w_2}{w_3}$ we set $w^*:=(w_3,w_2,w_1)$.
Analogously $(v_1,v_2,v_3)^*:=\clmn{v_3}{v_2}{v_1}$. Notice that $vw = w^*v^*$.

For each $x\in\F_q$ we set
$$
E(x):=\left(
\begin{array}{ccc}
1 & x & x^2/2\\
0 & 1 & x \\
0 & 0 & 1
\end{array}
\right).
$$
The set $E:=\{E(x)\,|\,x\in\F_q\}$
is an  elementary abelian subgroup of $GL_3(\F_q)$ isomorphic to $(\F_q,+)$.
Let also $S:=\{sI_3\,|\,s\in\F_q^{*2}\}$.
The group
$A:=ES$ is an abelian group of odd order $q(q-1)/2$. In what follows  $V$ is considered as a left $A$-space
while $V^*$ is considered as a right $A$-space.  The following formula is straightforward
\begin{equation}\label{*}
\forall_{v\in V}\ \forall_{g\in A} \ (gv)^*=v^*g.
\end{equation}

To describe the orbits of $A$ on $V$ and $V^*$ we introduce a certain set which will be used as an index set
for $A$-orbits. Define $I:=\F_q\cup\{\infty,\bullet\}$ and for each $i\in I$ define the $A$-orbits $O_i,i\in I$ and $O^*_i,i\in I$ as follows. For $i\in\F_q$ we set
$$
O_i = A\clmn{i}{0}{1} =
\left\{\clmn{s\left(\frac{x^2}{2}+i\right)}{sx}{s}\,\vline\,x\in\F_q,s\in\F_q^{*2}\right\}, O^*_i:=(O_i)^*
$$
$$
O_\infty:= A\clmn{0}{1}{0} =
\left\{\clmn{x}{s}{0}\,\vline\,x\in\F_q,s\in\F_q^{*2}\right\}, O^*_\infty:=(O_\infty)^*
$$
$$
O_\bullet:=A\clmn{1}{0}{0} =
\left\{\clmn{s}{0}{0}\,\vline\,s\in\F_q^{*2}\right\}, O^*_\bullet:=(O_\bullet)^*
$$

\begin{prop}\label{orbits}
The sets $\{O_i,-O_i\,|\,i\in I\}$ and $\{O^*_i,-O^*_i\,|\,i\in I\}$ form complete sets of
 non-zero orbits of $A$ on $V$ and $V^*$, respectively.
\end{prop}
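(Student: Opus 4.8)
The plan is to treat the action on $V$ directly and then deduce the statement for $V^*$ from the duality relation~(\ref{*}). First I would record the action of a general element $g=sI_3E(x)\in A$, with $s\in\F_q^{*2}$ and $x\in\F_q$, on a column vector:
$$
g\,\clmn{a}{b}{c}=\clmn{s(a+bx+\tfrac{x^2}{2}c)}{s(b+cx)}{sc}.
$$
By construction each of $O_i$, $O_\infty$, $O_\bullet$ is the $A$-orbit of its listed representative, and since $A$ acts linearly the sets $-O_i$, $-O_\infty$, $-O_\bullet$ are the $A$-orbits of the negated representatives. Thus only two points remain: that the listed orbits exhaust $V\setminus\{0\}$, and that they are pairwise distinct.

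For exhaustion, take a nonzero $v=\clmn{a}{b}{c}$ and argue by cases according to which of $c$, $b$, $a$ is its first nonzero entry read from the bottom. If $c\ne0$ and $c\in\F_q^{*2}$, then choosing $s=c$ and $x=b/c$ gives $g\,\clmn{i}{0}{1}=v$ with $i=(2ac-b^2)/(2c^2)$, so $v\in O_i$. If $c\ne0$ but $c\notin\F_q^{*2}$, then the hypothesis $q\equiv3\pmod 4$ forces $-1\notin\F_q^{*2}$, whence $-c\in\F_q^{*2}$; applying the previous case to $-v$ gives $-v\in O_i$, i.e.\ $v\in-O_i$. The cases $c=0\ne b$ and $c=b=0\ne a$ are entirely analogous and place $v$ in $O_\infty$ or $-O_\infty$, respectively $O_\bullet$ or $-O_\bullet$, according as the leading coordinate is a square or not. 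Hence every nonzero vector lies in at least one listed orbit.

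For distinctness it suffices to show the orbit containing a given $v$ is uniquely determined, and this I would do with two $A$-invariants. The shape of $v$ (whether $c=0$, and if so whether $b=0$) and the square-class in $\F_q^*/\F_q^{*2}$ of its leading nonzero coordinate are invariant, because that coordinate is only scaled by $s\in\F_q^{*2}$; this separates the three families and, using $q\equiv3\pmod 4$ once more, separates each $O$ from the matching $-O$. Within the family $c\ne0$ a direct computation shows $2ac-b^2$ is multiplied by $s^2$, so $(2ac-b^2)/(2c^2)=i$ is invariant and pins down the index. As a cross-check one may instead count: the representatives have trivial stabiliser for the $O_i$ and $O_\infty$ and stabiliser $E$ for $O_\bullet$, so $|O_i|=|O_\infty|=q(q-1)/2$ and $|O_\bullet|=(q-1)/2$; summing over all listed orbits gives $(q+1)q(q-1)+(q-1)=(q-1)(q^2+q+1)=q^3-1=|V\setminus\{0\}|$, so exhaustion forces disjointness.

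Finally, the claim for $V^*$ is immediate from~(\ref{*}): the bijection $v\mapsto v^*$ sends the left orbit $Av$ to the right orbit $(Av)^*=v^*A$, so the decomposition of $V^*$ into $\{O^*_i,-O^*_i\mid i\in I\}$ is simply the image under $*$ of the one just established for $V$. The only real care needed is to invoke $q\equiv3\pmod 4$ exactly where $-1$ must be a non-square (so that negation swaps the two square-classes and $-O_i\ne O_j$); the remaining computations are routine.
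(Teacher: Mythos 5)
Your proof is correct. Note that the paper itself states Proposition \ref{orbits} \emph{without} proof, treating it as a routine verification, so there is no authorial argument to compare against; your write-up simply supplies the omitted details, and it does so accurately. All the key steps check out: the explicit solution $s=c$, $x=b/c$, $i=(2ac-b^2)/(2c^2)$ for exhaustion; the use of $q\equiv 3\pmod 4$ (so $-1\notin\F_q^{*2}$) to place vectors with non-square leading coordinate into the negated orbits; the two invariants (shape plus square-class of the bottom-most nonzero coordinate, and $(2ac-b^2)/(2c^2)$, which is indeed multiplied by $s^2$ over $2c^2$'s factor of $s^2$) for distinctness; the stabilizer computation giving $|O_i|=|O_\infty|=q(q-1)/2$, $|O_\bullet|=(q-1)/2$ and the count $(q+1)q(q-1)+(q-1)=q^3-1$; and the transfer to $V^*$ via $(gv)^*=v^*g$ from~\eqref{*}, which sends left orbits $Av$ to right orbits $v^*A$ bijectively. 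Either the invariant argument or the counting argument alone would suffice for disjointness; having both is a harmless redundancy.
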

In what follows we set $\O:=\{O_i\,|\,i\in I\},-\O:=\{-O_i\,|\,i\in I\}$ and analogously for duals.

The orbits of $A$ on $V$ form a Schur partition $\cS$ while the $A$-orbits on $V^*$ form a dual Schur partition $\cS^*$.
A linear map ${}^*$ between $V$ and $V^*$ yields an isomorphism between these S-rings.
Notice that $\cS:=\O\cup -\O \cup \{\{0\}\}$ and $\cS^*:=\O^*\cup -\O^* \cup \{\{0\}\}$.

\section{The character table of $\cS$.}

Let  $\omega_p\in\C$ be a $p$-th primitive root of unity and $\tau:\F_q\rightarrow \C$ be the map defined by $\tau(x):=\omega_p^{tr(x)}$ where $tr(x)$ is an $\F_p$-trace of $x\in\F_q$. Clearly that $\tau(x+y)=\tau(x)\tau(y)$ for $x,y\in\F_q$.

The additive characters of $V$ are parametrized by the vectors of $V^*$ and the value $\chi_{v}(w)$ of the character  $\chi_v$
 (where $v\in V^*,w\in V$) is equal to $\tau(vw)$.

For the rest of the text we set $\z:=\sum_{x\in\F_q^{*2}} \tau(x)$ and $\Delta:=\z - \overline{z}$. It is well-known that
$$
\z=\frac{-1\pm\sqrt{-q}}{2}, \z + \overline{\z}=-1, \Delta = \pm \iota\sqrt{q}
$$
(here and later on $\ovr{z }$ means a complex conjugate of $z$).

The principal part of the character table of $\cS$ is a square matrix of size $2(q+2)$, its columns are parametrized by the non-zero $A$-orbits on $V$ while the rows are parametrized by the non-zero $A$-orbits on $V^*$. The value of the character table, denoted as $[R,O]$, corresponding to a pair of $A$-orbits $O\subset V,R\subset V^*$ is computed by the formula
$$
[R,O] = \sum_{w\in O}\chi_v(w).
$$
where $v\in R$ is an arbitrary element.

It follows from
$$
[R,-O]=[R,-O]=\overline{[R,O]}\mbox{ and } [-R,-O]=[R,O].
$$
that it is sufficient to compute only the numbers $[O^*_i,O_j]$ for $i,j\in I$.

In what follows we write $\sig(x),x\in\F_q^*$ for an automorphism of $\C$ which is identical if $x$ is a square and complex conjugation if $x$ is a non-square. Notice that for $a\in\F_q^*$ we always have that $\sum_{s\in\F_q^{*2}}\tau(as) = \z^{\sig(a)}$.

\begin{prop}\label{table} The values of $[O^*_i, O_j]$ are given in the following table
$$
\begin{array}{|c|c|c|c|}
\hline
\ & j\in \F_q & j = \infty & j = \bullet\\
\hline
i\in\F_q & P(i,j) & 0 & \z\\
\hline
i=\infty & 0 & q\z & \frac{q-1}{2}\\
\hline
i=\bullet & q\z & q\frac{q-1}{2} & \frac{q-1}{2}\\
\hline
\end{array}\\
$$
\centerline{Table 1}
where
\begin{equation}\label{entry1}
P(i,j)=\left\{
\begin{array}{rl}
\frac{q-1}{2}(1+2\z^{\sig(2)}) & i+j=0;\\
\z^{\sig(i+j)} (1+2\z^{\sig(2)}) & i+j\neq 0
\end{array}
\right. .
\end{equation}
\end{prop}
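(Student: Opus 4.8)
The plan is to evaluate the defining sum $[O^*_i,O_j]=\sum_{w\in O_j}\tau(vw)$ directly, for a conveniently chosen representative $v\in O^*_i$, running through the nine combinations of $i,j\in\{\F_q,\infty,\bullet\}$. Since the value is independent of the chosen $v\in O^*_i$ and each orbit is parametrised bijectively by the data displayed in its definition (the last coordinate recovers $s$, after which the remaining coordinate recovers $x$), every such sum may be computed over those parameters without overcounting. Only two elementary tools are needed: the orthogonality relation $\sum_{x\in\F_q}\tau(ax)=0$ for $a\in\F_q^*$ (and $=q$ for $a=0$), and the evaluation $\sum_{s\in\F_q^{*2}}\tau(as)=\z^{\sigma(a)}$ for $a\in\F_q^*$ recorded just before the statement.

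For the eight entries in which $i$ or $j$ equals $\infty$ or $\bullet$, I would substitute explicit coordinates and read off the pairing $vw$; in each case it falls into one of three types. Either $vw$ vanishes identically in all parameters, so the sum simply counts $|O_j|$ (this gives $\tfrac{q-1}{2}$ for the entries $(\infty,\bullet)$ and $(\bullet,\bullet)$, and $q\tfrac{q-1}{2}$ for $(\bullet,\infty)$); or $vw$ is a nonzero $\F_q$-linear form in a parameter ranging freely over $\F_q$, so the sum is $0$ by orthogonality (the entries $(\infty,j)$ and $(i,\infty)$ with $i,j\in\F_q$); or $vw$ reduces to $s$, so that summing over $s\in\F_q^{*2}$ produces $\z$, multiplied by an extra factor $q$ exactly when a second parameter is free (giving $q\z$ for $(\bullet,j)$ and $(\infty,\infty)$, and $\z$ for $(i,\bullet)$). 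Each is a one-line computation once the coordinates are written out.

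The substantive case is $i,j\in\F_q$. I would take the representative $v=(1,0,i)\in O^*_i$ (the choice $s=1$, $x=0$ in the description of $O^*_i$) and write $w=\clmn{s(y^2/2+j)}{sy}{s}$ for a general element of $O_j$. A direct computation collapses the pairing to $vw=s\bigl(\tfrac12 y^2+(i+j)\bigr)$, so that, with $c:=i+j$,
\[
[O^*_i,O_j]=\sum_{s\in\F_q^{*2}}\tau(sc)\sum_{y\in\F_q}\tau\!\left(\tfrac{s}{2}y^2\right).
\]
For each square $s=r^2$ the substitution $y\mapsto r^{-1}y$ identifies the inner sum with $\sum_{y\in\F_q}\tau(\tfrac12 y^2)$; splitting off $y=0$ and using that $y\mapsto y^2$ covers each nonzero square twice, this equals $1+2\sum_{w\in\F_q^{*2}}\tau(\tfrac{w}{2})=1+2\z^{\sigma(1/2)}=1+2\z^{\sigma(2)}$, independent of $s$. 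Pulling this constant out leaves $[O^*_i,O_j]=(1+2\z^{\sigma(2)})\sum_{s\in\F_q^{*2}}\tau(sc)$, and the remaining sum equals $\tfrac{q-1}{2}$ when $c=0$ and $\z^{\sigma(c)}=\z^{\sigma(i+j)}$ when $c\ne 0$; this reproduces both branches of $P(i,j)$ in (\ref{entry1}).

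The step I expect to be the crux is the evaluation of the quadratic sum $\sum_{y\in\F_q}\tau(\tfrac12 y^2)=1+2\z^{\sigma(2)}$, since this is the one place where the arithmetic of $\z$ — through the displayed evaluation applied with $a=\tfrac12$, which in turn encodes the hypothesis $q\equiv 3\pmod 4$ — genuinely enters, and where one must keep careful track of whether $2$ is a square in $\F_q$. Everything else is bookkeeping; the one real piece of algebra is that the representative $v=(1,0,i)$ lets the quadratic term $y^2/2$ in the parametrisation of $O_j$ pass through the pairing unchanged, so that the computation reduces to a single standard quadratic character sum and no completion of the square is required.
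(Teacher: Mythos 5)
Your proposal is correct: all nine entries come out as in Table 1, and your evaluation of the crucial case $i,j\in\F_q$ (representative $v=(1,0,i)$, collapse of the pairing to $s(\tfrac12 y^2+i+j)$, the substitution $y\mapsto r^{-1}y$ showing the inner Gauss sum is independent of the square $s$, and the evaluation $\sum_y\tau(y^2/2)=1+2\z^{\sigma(2)}$ via $\sigma(1/2)=\sigma(2)$) is exactly the computation the paper performs in its Case A. The one structural difference is in how the remaining entries are organized: the paper first proves a reciprocity relation $[O^*_i,O_j]/|O_j|=[O^*_j,O_i]/|O_i|$, derived from the adjunction $(gv)^*=v^*g$ of equation (\ref{*}), and uses it to cut the nine entries down to six direct computations (the entries $(i,\infty)$, $(i,\bullet)$ for $i\in\F_q$ and $(\infty,\bullet)$ are then read off from their transposes), whereas you compute all nine entries directly. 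Since the three entries the paper recovers by reciprocity are each one-line orthogonality computations, your route loses nothing and is arguably more self-contained; what the paper's reciprocity buys is a structural fact about the character table (a duality symmetry of the S-ring) that is of some independent interest, and a guarantee of consistency between symmetric entries that your case-by-case computation must exhibit by hand. A further small point in your favor: you make explicit the bijectivity of the orbit parametrizations and the substitution argument justifying $\sum_x\tau(sx^2/2)=\sum_x\tau(x^2/2)$ for square $s$, both of which the paper uses silently. One quibble: the congruence $q\equiv 3\pmod 4$ is not actually what makes the evaluation $\sum_{s\in\F_q^{*2}}\tau(as)=\z^{\sigma(a)}$ work (that holds for any odd $q$); it only governs the arithmetic nature of $\z$ itself, which is irrelevant to this proposition.
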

\begin{proof}
First we show that
$$
\frac{[O^*_i,O_j]}{|O_j|} = \frac{[O^*_j,O_i]}{|O_i|}.
$$
Pick arbitrary $v\in O_i,w\in O_j$. Then
$$
[O^*_i,O_j]=\sum_{u\in O_j}\chi_{v^*}(u) = \frac{|O_j|}{|G|} \sum_{g\in G}\chi_{v^*}(gw) =
\frac{|O_j|}{|G|} \sum_{g\in G}\tau(v^*(gw)).
$$
It follows from~\eqref{*} that
$$
v^*(gw) = (v^*g)w=w^*(v^*g)^*=w^*(g v).
$$
Therefore
$$
[O^*_i,O_j]=\frac{|O_j|}{|G|} \sum_{g\in G} \tau(w^*(g v)) =
\frac{|O_j|}{|G|} \sum_{g\in G} \chi_{w^*}(g v) = \frac{|O_j|}{|G|}\cdot\frac{|G|}{|O_i|}[O_j^*,O_i]
= \frac{|O_j|}{|O_i|}[O_j^*,O_i].
$$

Thus it is sufficient to check Table 1 for 6  cases only:
$$
i,j\in\F_q;\ i=\infty, j\in\F_q;\ i=\bullet,j\in F_q;\ i=j=\infty;\ i=\bullet,j = \infty;\  i=j=\bullet.
$$
Each of these cases is checked below separately.

\medskip

{\sc Case A.} $i,j\in\F_q$. Since the value of $[O^*_i,O_j]$ does not depend on a choice of $v\in O^*_i$ we can take $v = (1,0,i)$. Then
$$
[O^*_i,O_j] = \sum_{w\in O_j}\chi_v(w) = \sum_{w\in O_j} \tau(vw) =
\sum_{s\in\F_q^{*2},x\in\F_q} \tau\left(\left(1,0,i\right)\clmn{s\left(\frac{x^2}{2}+j\right)}{sx}{s}\right) =
$$
$$
\sum_{s\in\F_q^{*2},x\in\F_q} \tau\left(s\left(\frac{x^2}{2}+j\right) +si\right) =
\sum_{s\in\F_q^{*2},x\in\F_q} \tau\left(s\frac{x^2}{2}  + s(i+j)\right) =
$$
$$
\sum_{s\in\F_q^{*2}}\sum_{x\in\F_q} \tau\left(s\frac{x^2}{2}\right)\tau( s(i+j)) =
\sum_{s\in\F_q^{*2}}\tau( s(i+j)) \sum_{x\in\F_q} \tau\left(\frac{x^2}{2}\right) =
$$
$$
\sum_{s\in\F_q^{*2}}\tau( s(i+j))(1+2\z^{\sig(2)}).
$$
If $i+j=0$, then the latter sum equals to $(1+2\z^{\sig(2)}) \frac{q-1}{2}$.
If $i+j\neq 0$, then the latter sum is
$\z^{\sig(i+j)}(1+\z^{\sig(2)})$.

\medskip

{\sc Case B.} $i=\infty, j\in\F_q$.\\
For $v=(0,1,0)\in O^*_\infty$ we obtain
$$
[O^*_\infty,O_j] = \sum_{w\in O_j}\chi_{(0,1,0)}(w)
\sum_{s\in\F_q^{*2},x\in\F_q} \tau\left(\left(0,1,0\right)\clmn{s\left(\frac{x^2}{2}+j\right)}{sx}{s}\right) =
%$$
%$$
\sum_{s\in\F_q^{*2},x\in\F_q} \tau(sx) = 0.
$$

\medskip

{\sc Case C.} $i=\bullet, j\in\F_q$.\\
For $v=(0,0,1)\in O^*_\bullet$ we obtain
$$
[O^*_\bullet,O_j] =  \sum_{w\in O_j}\chi_{(0,0,1)}(w)  =
\sum_{s\in\F_q^{*2},x\in\F_q} \tau\left(\left(0,0,1\right)\clmn{s\left(\frac{x^2}{2}+j\right)}{sx}{s}\right) =
\sum_{s\in\F_q^{*2},x\in\F_q} \tau(s) =q \z.
$$

\medskip

{\sc Case D.} $i=j=\infty$.\\
Take $(0,1,0)\in O^*_\infty$. Then
$$
[O^*_\infty,O_\infty] =
 \sum_{w\in O_\infty}\chi_{(0,1,0)}(w) =
\sum_{s\in\F_q^{*2},x\in\F_q} \tau\left(\left(0,1,0\right)\clmn{x}{s}{0}\right) =
\sum_{s\in\F_q^{*2},x\in\F_q} \tau(s) = q\z.
$$

\medskip

{\sc Case E.} $i=\bullet, j=\infty$.\\
Take $(0,0,1)\in O^*_\bullet$. Then
$$
[O^*_\bullet,O_\infty] =
 \sum_{w\in O_\infty}\chi_{(0,0,1)}(w) =
\sum_{s\in\F_q^{*2},x\in\F_q} \tau\left(\left(0,0,1\right)\clmn{x}{s}{0}\right) =
\sum_{s\in\F_q^{*2},x\in\F_q} \tau(0) = q\frac{q-1}{2}.
$$

\medskip

{\sc Case F.} $i=j=\bullet$.\\
Take $(0,0,1)\in O^*_\bullet$. Then
$$
[O^*_\bullet,O_\bullet] =
 \sum_{w\in O_\bullet}\chi_{(0,0,1)}(w) =
\sum_{s\in\F_q^{*2}} \tau\left(\left(0,0,1\right)\clmn{s}{0}{0}\right) =
\sum_{s\in\F_q^{*2}} \tau(0) = \frac{q-1}{2}.
$$
\qed
\end{proof}

\section{$A$-invariant skew Hadamard difference sets}

\begin{prop}\label{SH}
A subset $D\subseteq V\setminus\{0\}$ is a skew Hadamard difference set iff
it satisfies the following conditions:
\begin{itemize}
\item[{\rm (SH1)}] $- D\cap D=\emptyset$;
\item[(SH2)]  $-D\cup D = V\setminus\{0\}$;
\item[(SH3)]  $\chi_v(D)-\overline{\chi_v(D)} = \pm \iota q\sqrt{q}$ for each $v\in V^*\setminus\{0\}$.
\end{itemize}
\end{prop}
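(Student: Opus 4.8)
The plan is to separate the two purely combinatorial conditions (SH1) and (SH2)---which are merely the additive restatement of the defining requirements $V\setminus\{0\}=D\cup(-D)$ and $D\cap(-D)=\emptyset$ of a skew Hadamard difference set---from the difference-set equation \eqref{difset}. Since $D$ is an SHDS exactly when it is a $(q^3,\frac{q^3-1}{2},\frac{q^3-3}{4})$-difference set satisfying (SH1) and (SH2), it suffices to prove that, under the standing assumptions (SH1) and (SH2), equation \eqref{difset} is equivalent to (SH3); both implications of the proposition then follow simultaneously.

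First I would translate \eqref{difset} into characters. As the additive characters $\chi_v$, $v\in V^*$, separate the elements of $\C[V]$, the identity \eqref{difset} holds iff $\chi_v$ agrees on both sides for every $v$. Using $\chi_v(\und{D}^{(-1)})=\overline{\chi_v(\und D)}$ (a consequence of $\tau(-x)=\overline{\tau(x)}$) together with the orthogonality relation $\chi_v(\und V)=0$ for $v\neq0$, the image of \eqref{difset} under a non-principal $\chi_v$ reads $|\chi_v(\und D)|^2=k-\lambda=\frac{q^3+1}{4}$, while under the principal character it reads $k^2=(k-\lambda)+\lambda q^3$, an identity automatically satisfied by the SHDS parameters as soon as $|D|=k=\frac{q^3-1}{2}$. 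Thus \eqref{difset} is equivalent to the single family $|\chi_v(\und D)|^2=\frac{q^3+1}{4}$ ranging over all $v\neq0$.

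Next I would read off the real part of $\chi_v(\und D)$ from (SH1) and (SH2). These say precisely that $D$ and $-D$ partition $V\setminus\{0\}$; hence $|D|=\frac{q^3-1}{2}=k$, and summing $\chi_v$ over this partition gives, for every $v\neq0$, $\chi_v(\und D)+\overline{\chi_v(\und D)}=\chi_v(\und V)-1=-1$, i.e.\ $\operatorname{Re}\chi_v(\und D)=-\tfrac12$. With the real part fixed, the condition $|\chi_v(\und D)|^2=\frac{q^3+1}{4}$ becomes equivalent to $\bigl(\operatorname{Im}\chi_v(\und D)\bigr)^2=\frac{q^3}{4}$, that is, to $\chi_v(\und D)-\overline{\chi_v(\und D)}=\pm\iota q\sqrt q$, which is exactly (SH3). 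This closes the equivalence in both directions.

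The only step beyond parameter bookkeeping is the character-separation argument that converts the group-algebra identity \eqref{difset} into the scalar conditions on $|\chi_v(\und D)|$; the rest is arithmetic with $k=\frac{q^3-1}{2}$, $\lambda=\frac{q^3-3}{4}$, $k-\lambda=\frac{q^3+1}{4}$. I would take particular care with the principal character $v=0$, where the content is merely $|D|=k$: this is the one place the argument could silently break, and it is saved exactly because (SH1) and (SH2) already force $|D|=\frac{q^3-1}{2}$.
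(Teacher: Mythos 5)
Your proof is correct, and it shares its computational core with the paper's: both extract $\chi_v(\und{D})+\overline{\chi_v(\und{D})}=-1$ for all $v\neq 0$ from (SH1)--(SH2), so that everything reduces to controlling the imaginary part of $\chi_v(\und{D})$. The difference is in how that reduction is completed. The paper does not prove it: having pinned down $\chi_v(\und{D})=\frac{-1\pm\sqrt{-q^3}}{2}$, it invokes Lemma 2.5 of Weng--Hu \cite{wenghu} as a black box for the equivalence, in both directions, between this character condition and the SHDS property. You instead prove the equivalence from scratch: since the characters of the abelian group $V$ separate elements of $\C[V]$, the group-algebra identity \eqref{difset} is equivalent to the family of scalar identities obtained by applying each $\chi_v$, namely $|\chi_v(\und{D})|^2=k-\lambda=\frac{q^3+1}{4}$ for $v\neq 0$ together with the identity $k^2=(k-\lambda)+\lambda q^3$ at the principal character, and you correctly observe that the latter is automatic precisely because (SH1)--(SH2) already force $|D|=\frac{q^3-1}{2}$. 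In effect you have reproved the abelian case of the cited lemma; what this buys is a self-contained argument in which the two real inputs (injectivity of the Fourier transform on $\C[V]$ and the parameter arithmetic) are visible, at the cost of length, whereas the paper's route is two lines but hides the substance inside the citation. Incidentally, your computation of the real part also exposes a small typo in the paper's proof, which writes $\chi_v(D)=\frac{1\pm\sqrt{-q^3}}{2}$ where the correct value is $\frac{-1\pm\sqrt{-q^3}}{2}$.
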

\begin{proof} If $D$ satisfies (SH1)-(SH2), then $\chi_v(D)+\overline{\chi_v(D)} = -1$ for
 each $v\in V^*\setminus\{0\}$, Together with (SH3) this implies that $\chi_v(D) =\frac{1\pm\sqrt{-q^3}}{2}$
for  each $v\in V^*\setminus\{0\}$. By \cite{wenghu}, Lemma 2.5 $D$ is a skew Hadamard difference set.

Vice versa, if $D$ is a skew Hadamard difference set, then (SH1)-(SH2) follow directly from the definition.
The condition (SH3) is a consequence of Lemma 2.5, \cite{wenghu}.\qed
\end{proof}
Let $\DD$ denote the set of all
 $A$-invariant  skew Hadamard difference sets, that is $D\in\DD$ iff $D$ is a union of $A$-orbits. It follows from (SH1)-(SH2) that a subset $D\in\DD$ contains exactly one orbit from $\{O_i,-O_i\}$ for each $i\in I$. In other words, the intersection $D\cap (O_i\cup -O_i)$ is either $O_i$ or $-O_i$. Therefore we can write $D\cap (O_i\cup -O_i) = \veps(i) O_i$ where $\veps(i)=\pm 1$.
Thus any $A$-invariant skew Hadamard difference set is uniquely determined by a function $\veps:I\rightarrow\{\pm 1\}$. In what follows we denote $D_\veps:=\bigcup_{i\in I} \veps(i)O_i$.

Although there are $2^{q+2}$ functions from $I$ into $\{\pm 1\}$, not every one of them yields a SHDS. A subset $D_\veps$ will be a SHDS iff it satisfies condition (SH3). Notice that
$$
D_{-\veps} = -D_{\veps}, D_{-\veps} \cap D_{\veps} = \emptyset, D_{-\veps} \cup D_{\veps} = V\setminus\{0\}.
$$
Thus $D_{\veps}$ is a SHDS iff $D_{-\veps}$ is a SHDS (the complementary  SHDS).

In order to describe all $A$-invariant SHDS we define $J_\veps:=\{i\in\F_q\,|\,\veps(i)=1\}$.
\begin{thm}\label{SHfusions}Let $\veps:I\rightarrow \{\pm 1\}$ be an arbitrary function.
Then $D_\veps\in\DD$
 iff $|J_\veps|=\frac{q+\mu}{2}$ and $\veps(\bullet) =\left(\frac{2}{p}\right)\mu$ where $\mu = \pm 1$.
\end{thm}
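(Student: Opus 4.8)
The plan is to verify the conditions of Proposition~\ref{SH}. Since $D_\veps$ contains exactly one of $O_i,-O_i$ for every $i\in I$, conditions (SH1) and (SH2) hold automatically, so the whole content of the theorem lies in checking (SH3), namely that $\chi_v(D_\veps)-\overline{\chi_v(D_\veps)}=\pm\iota q\sqrt q$ for every $v\in V^*\setminus\{0\}$. Writing $\chi_v(D_\veps)=\sum_{i\in I}\chi_v(\veps(i)O_i)$ and noting that $\chi_v(-O_i)=\overline{\chi_v(O_i)}$, every summand is either the table entry $[R,O_i]$ (when $\veps(i)=1$) or its conjugate (when $\veps(i)=-1$), where $R$ is the $A$-orbit of $v$. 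Because replacing $v$ by $-v$ conjugates $\chi_v(D_\veps)$ and merely flips the sign in (SH3), it suffices to run the computation for $v$ lying in $O^*_k\ (k\in\F_q)$, in $O^*_\infty$, and in $O^*_\bullet$; these three cases correspond exactly to the three rows of Table~1.

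The computational engine is the observation that $\Delta=\z-\overline\z$ is purely imaginary with $\Delta^2=-q$ and $\overline\Delta=-\Delta$. I would first record the normalisations $\z=\tfrac{-1+\Delta}{2}$ and $\z^{\sig(m)}=\tfrac{-1+\chi(m)\Delta}{2}$, where $\chi(m)=1$ if $m$ is a nonzero square in $\F_q$ and $-1$ otherwise, together with the key identity $1+2\z^{\sig(2)}=\left(\frac2p\right)\Delta$, which follows because $2$ is a square in $\F_q$ exactly when $\left(\frac2p\right)=1$ (here $q=p^n$ with $n$ odd). Substituting these into the entries of Table~1 turns every table value into the form $(\text{rational})+(\text{rational})\Delta$. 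Since the real parts cancel in $\chi_v(D_\veps)-\overline{\chi_v(D_\veps)}$, I only track the coefficient $B$ of $\Delta$ in $\chi_v(D_\veps)$; condition (SH3) then reads simply $2B=\pm q$. Setting $\Sigma:=\sum_{j\in\F_q}\veps(j)=2|J_\veps|-q$, the three cases yield, after routine bookkeeping, $2B=q\Sigma$ for $v\in O^*_\bullet$, $2B=\veps(\infty)q$ for $v\in O^*_\infty$, and $2B=\left(\frac2p\right)\veps(-k)q-\left(\frac2p\right)\Sigma+\veps(\bullet)$ for $v\in O^*_k$.

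Reading off the constraints finishes the proof. The $O^*_\infty$ row gives $2B=\pm q$ unconditionally, so it imposes nothing. The $O^*_\bullet$ row forces $q\Sigma=\pm q$, i.e. $\Sigma=\mu\in\{\pm1\}$, which is precisely $|J_\veps|=\frac{q+\mu}{2}$. Feeding $\Sigma=\mu$ into the $O^*_k$ expression gives $2B=\left(\frac2p\right)\veps(-k)q+\bigl(\veps(\bullet)-\left(\frac2p\right)\mu\bigr)$; since $\left(\frac2p\right)\veps(-k)q=\pm q$ while the bracketed correction has absolute value at most $2<q$, the value $2B$ can equal $\pm q$ for every $k$ only when $\veps(\bullet)=\left(\frac2p\right)\mu$, and conversely this choice makes (SH3) hold for all $k$. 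Combining the three rows shows $D_\veps\in\DD$ iff $|J_\veps|=\frac{q+\mu}{2}$ and $\veps(\bullet)=\left(\frac2p\right)\mu$, as claimed. The only delicate point is the $O^*_k$ case: one must keep the two index-sets ($j=-k$ versus $j\neq-k$) separate, use $\Delta^2=-q$ to extract the imaginary part cleanly, and then exploit the size gap $2<q$ to pin down $\veps(\bullet)$ — this is where I expect the main effort to go.
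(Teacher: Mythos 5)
Your proposal is correct and follows essentially the same route as the paper: both reduce the theorem to condition (SH3) of Proposition~\ref{SH}, evaluate $\chi_v(D_\veps)-\overline{\chi_v(D_\veps)}$ row by row from Table~1 (your coefficient-of-$\Delta$ bookkeeping is exactly the paper's computation of the matrix-vector product $T'\veps^t$), extract $\mu=\pm1$ from the $O^*_\bullet$ row, and pin down $\veps(\bullet)=\left(\frac{2}{p}\right)\mu$ from the $O^*_k$ rows via the identity $\Delta^{\sig(2)}=\left(\frac{2}{p}\right)\Delta$. Your explicit size-gap argument ($|\veps(\bullet)-\left(\frac{2}{p}\right)\mu|\le 2<q$) makes precise a step the paper leaves implicit, but the substance is identical.
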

\begin{proof}
Pick an arbitrary vector $v\in V^*\setminus\{0\}$. Then
$$
\chi_v(D_\veps) - \ovr{\chi_v(D_\veps)} = \sum_{i\in I} \chi_v(\veps(i)O_i)  -\ovr{\chi_v(\veps(i)O_i)}
$$
Taking into account that  $\chi_v(-T) = \ovr{\chi_v(T)}$ we may rewrite the above equality as follows
\begin{equation}\label{eq1}
\chi_v(D_\veps) - \ovr{\chi_v(D_\veps)} =\sum_{i\in I} \veps(i)(\chi_v(O_i)  -\ovr{\chi_v(O_i)})
\end{equation}
Thus a subset $D_\veps$ will satisfy (SH3) (and, therefore, will be a SHDS) iff the above expression will be  equal to $\pm\iota q\sqrt{q}$ for
all $v\in V^*\setminus\{0\}$.
The value of the above sum depend only on the orbit to which the vector $v$ belongs. Moreover it follows from $\chi_{-v}(T) =\ovr{\chi_v(T)}$ that it is enough to check~\eqref{eq1} only for $v\in O^*_i,i\in I$.

Now we build a square matrix $T$ of order $q+2$ the rows and columns of which are indexed by the elements of $I$ and $T_{i,j}$ is the value $\chi_v(O_j) - \ovr{\chi_v(O_j)}$
where $v\in O^*_i$.

Using this matrix one can reformulate the condition for $\veps$ to produce a skew Hadamard difference set. Namely, the function $\veps$ produces a skew Hadamard difference set iff $T\veps^t$ is a column vector with entries $\pm \iota q\sqrt{q}$.

If $i,j\in\F_q$, then by~\eqref{entry1} we obtain that
\begin{equation}\label{T}
T_{i,j} =
\left\{
\begin{array}{rl}
(q-1)(\z_1-\ovr{\z_1}) & i+j=0;\\
\z_2 (1+2\z_1) - \ovr{\z_2(1+2\z_1) }  & i+j\neq 0
\end{array}
\right.
\end{equation}
where $\z_1 = \z^{\sig(2)}$ and $\z_2 = \z^{\sig(i+j)}$.

If $i+j=0$, then
$T_{i,j} =(q-1)\Delta^{\sig(2)}$.

Assume now that $i+j\neq 0$.

If $\sig(i+j)=\sig(2)$, then $\z_2 = \z_1$ and
$$
T_{i,j} = \z_1(1+2\z_1)-\ovr{\z_1(1+2\z_1)}=(\z_1 - \ovr{\z_1})(1 + 2(\z_1 + \ovr{\z_1})) =
\ovr{\z_1} - \z_1.
$$

If $\sig(i+j)\neq\sig(2)$, then $\z_2 = \ovr{\z_1}$ and
$$
T_{i,j} =\ovr{\z_1}(1+2\z_1)-\z_1(1+2\ovr{\z_1}) = \ovr{\z_1} - \z_1.
$$
Thus  $T_{i,j}=\ovr{\z_1}-\z_1 = - \Delta^{\sig(2)}$ whenever $i+j\neq 0$.

Using Table 1 one can finally compute $T$. It is more convenient to replace $T$ be the matrix $T'$ obtained from $T$ by row permuatation $i\mapsto -i,i\in \F_q$ ( the rows and $\infty,\bullet$ are not moved). The matrix $T'$ has the following block form
$$
T' = \left(
\begin{array}{c|c|c|c}
\ & \F_q & \infty & \bullet \\
\hline
\F_q & \Delta^{\sig(2)}(q{\bf I}_q - {\bf J}_q) & {\bf 0}^t & \Delta {\bf 1}^t \\
\hline
\infty & {\bf 0} & q\Delta & 0 \\
\hline
\bullet & q\Delta{\bf 1} & 0 & 0
\end{array}
\right)
$$
where ${\bf 0}$ and ${\bf 1}$ are zero and all-one row vectors of length $q$; and ${\bf I}_q$,${\bf J}_q$ are the identity and all-one matrices of size $q$. In order to compute the product $T'\veps^t$ we write $\veps^t$ in a block form
$$
\veps^t=\clmn{\veps_0^t}{\veps(\infty)}{\veps(\bullet)}
$$
where $\veps_0 :=\veps_{\F_q}$ is the restriction of $\veps$ onto $\F_q$.
In this notation $T'\veps^t$  has the following form
\begin{equation}\label{eq2}
\left(
\begin{array}{c|c|c}
\Delta^{\sig(2)} (q{\bf I}_q - {\bf J}_q) & {\bf 0}^t & \Delta {\bf 1}^t \\
\hline
{\bf 0} & q\Delta & 0 \\
\hline
q\Delta{\bf 1} & 0 & 0
\end{array}
\right)
 \clmn{\veps_0^t}{\veps(\infty)}{\veps(\bullet)} =
\clmn{q\Delta^{\sig(2)}\veps_0^t +(\Delta\veps(\bullet)- \Delta^{\sig(2)} \mu) {\bf 1}^t}{q\Delta\veps(\infty)}{q\Delta\mu},
\end{equation}
where $\mu$ is the coordinate sum of the vector $\veps_0$.
The entries of the right side of \eqref{eq2} are equal to $\pm\iota q\sqrt{q} = \pm q\Delta$ iff
$\mu = \pm 1$ and $\Delta\veps(\bullet) = \Delta^{\sigma(2)} \mu$. Together with $\Delta^{\sig(2)}=\left(\frac{2}{p}\right)\Delta$ we obtain that $\veps(\bullet) = \left(\frac{2}{p}\right)\mu$.
To finish the proof it remains to notice that $|J_\veps|=\frac{q+\mu}{2}$.
\qed
\end{proof}

An elementary counting shows that there are
$4{q\choose\frac{q+1}{2}}$ functions satisfying the conditions of Theorem~\ref{SHfusions}.
Therefore we obtain $4{q\choose\frac{q+1}{2}} > 2^{q+2}/q$ skew Hadamard difference sets (including complements). Notice that two distinct difference sets may be equivalent, and, therefore, produce isomorphic designs. For example, any element $g\in {\bf N}_{\aut(V)}(A)$  permutes the $A$-orbits, and therefore, also permutes the elements of  $\DD$ mapping each set $D\in\DD$ to an equivalent one.

\section{Isomorphisms between the translation designs}

The main result of this section solves the isomorphism problem for translation designs generated by difference sets from $\DD$.
\begin{thm}\label{main2}
Given $D,D'\in\DD$, the designs $\des(D)$ and $\des(D')$ are isomorphic iff they are isomorphic  by an element of ${\bf N}_{\aut(V)}(A)$.
\end{thm}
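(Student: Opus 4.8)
The \emph{if} direction is trivial: an element of ${\bf N}_{\aut(V)}(A)$ realizing an isomorphism $\des(D)\to\des(D')$ is in particular an isomorphism, so the two designs are isomorphic. The content lies in the \emph{only if} direction. Here $\aut(V)$ denotes the automorphism group of the elementary abelian group $V\cong C_p^{3n}$, that is $GL_{3n}(\F_p)$; it contains $GL_3(\F_q)$, and hence $A$. My plan is a two-stage reduction: first reduce an arbitrary design isomorphism to an element of $\aut(V)$, and then show that any such automorphism realizing the isomorphism must normalize $A$.

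Write $G:=\aut(\des(D))$ and $G':=\aut(\des(D'))$; by the introduction both contain $V_+\rtimes A$, with $V_+$ regular on points and $A$ inside the stabilizer $G_0$ of the point $0$. An isomorphism $\phi:\des(D)\to\des(D')$ conjugates $G$ onto $G'$. For the first stage I would prove that $V_+$ is canonical, namely that every regular elementary abelian subgroup of order $q^3$ of $G'$ is conjugate in $G'$ to $V_+$ (it would suffice to show $V_+\trianglelefteq G'$, though only conjugacy is needed). Granting this, I may left-multiply $\phi$ by an element of $G'$ so that $\phi$ normalizes $V_+$; then $\phi$ lies in the holomorph $V_+\rtimes\aut(V)$, and since the translation by $-\phi(0)$ lies in $V_+\le G'$, I may further assume $\phi\in\aut(V)$ fixes $0$ and still maps $\des(D)$ to $\des(D')$.

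For the second stage $\phi\in\aut(V)$ fixes $0$ and conjugates $G$ onto $G'$. The cleanest route is to recover the Schur ring $\cS$ intrinsically from the design: because $V_+\le G$, the coherent closure of $\des(D)$ is a Cayley scheme over $V$, i.e. a Schur ring, and I would verify — using the explicit eigenvalues of Table 1, which pin down the intersection numbers — that this Schur ring equals $\cS$ for every $D\in\DD$. An isomorphism $\phi\in\aut(V)$ fixing $0$ then induces an isomorphism of $\cS$ onto itself, so $\phi$ permutes the $A$-orbits $\{O_i,-O_i\}$. Finally, from the explicit orbit description of Section 3 I would check that
$$
A=\{g\in\aut(V)\,:\,g(O)=O\text{ for every }A\text{-orbit }O\};
$$
since $\phi$ permutes the $A$-orbits, conjugation by $\phi$ preserves the right-hand side, whence $\phi A\phi^{-1}=A$, i.e. $\phi\in{\bf N}_{\aut(V)}(A)$. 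Proposition \ref{pointblock} supplies, throughout, the dictionary between the fixed points of $G_0$ and the fixed blocks of the corresponding block stabilizer needed to transport group-theoretic information between points and blocks.

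The decisive obstacle is to determine $\aut(\des(D))$ tightly enough that both stages succeed uniformly in $D\in\DD$: I must exclude exotic regular translation groups in $G'$ (first stage) and show the coherent closure never drops below $\cS$ (second stage), precisely for the most symmetric functions $\veps$, where $G$ could a priori be strictly larger than $V_+\rtimes A$. I expect to control this by feeding the explicit character values of Table 1 — the entries $P(i,j)$, $q\z$ and $\frac{q-1}{2}$ — into the coherent-configuration machinery, since they determine all intersection numbers of the scheme and thereby constrain its automorphisms.
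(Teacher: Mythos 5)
Your Stage 2 rests on a claim that is false, and this is not a repairable detail but precisely the crux of why the theorem is hard. For every $D\in\DD$ the partition $\{\{0\},D,-D\}$ of $V$ is itself a Schur partition --- this is exactly the characterization of skew Hadamard difference sets recalled in Section 2 --- so the corresponding rank-$3$ Cayley scheme is already a coherent configuration in which $D$ is a basis relation. Since the coherent (WL) closure of $\des(D)$, equivalently of $\cay(V,D)$, is the \emph{coarsest} coherent configuration with that property, the closure equals this rank-$3$ scheme for every $D\in\DD$: it is the same for all choices of $\veps$, carries no trace of the $A$-orbits, and in particular never equals $\cS$. The intersection numbers you propose to feed into the machinery are those of $\cS$ (Table 1 is the character table of $\cS$, not of anything reconstructible from the design alone); the closure's own parameters are the generic skew-Hadamard ones, shared for instance by Paley designs, whose automorphism groups are enormous --- so they constrain nothing. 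Consequently the deduction that $\phi$ permutes the $A$-orbits has no support, and your identity $A=\{g\in\aut(V)\,:\,gO=O\text{ for every }A\text{-orbit }O\}$ (which is true, but itself requires a page of functional-equation computation comparable to Proposition~\ref{norm}) never comes into play. An automorphism of $V$ carrying $\des(D)$ to $\des(D')$ has a priori no reason to respect $A$-orbits; proving that it can be modified so as to normalize $A$ is the whole battle.

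Stage 1 is likewise deferred exactly where the work lies: the conjugacy of all regular elementary abelian subgroups of order $q^3$ in $G'$ is what the paper extracts from Theorem~\ref{sylow} (a Sylow $p$-subgroup of $G'$ is $QEV_+$) together with Proposition~\ref{E} (every elementary abelian subgroup of maximal order in $QEV_+$ equals $V_+$, i.e.\ $\bE(QEV_+)=V_+$); these in turn need the normalizer computation (Proposition~\ref{norm}), the centralizer estimate (Propositions~\ref{regsubgroups}, \ref{centralizer}) and the fixed-block analysis (Propositions~\ref{fixes}, \ref{function_veps}). Nothing combinatorial can replace this, for the reason given above. Note also that the paper's endgame is not your Stage 2: after arranging that the isomorphism normalizes $V_+$ and then $E$ (via Sylow conjugacy and the characteristic subgroups $\bE$, $\bE_2$), it must still conjugate $S$ into place by a Hall $p'$-subgroup argument before landing in $\bN_{\aut(V)}(SE)=\bN_{\aut(V)}(A)$; normalizing $V_+$ alone does not reduce the problem to permuting orbits. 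So your skeleton is reasonable in outline (reduce to $\aut(V)$, then pin down $A$), but both of its load-bearing steps are missing, and the one concrete mechanism you propose --- coherent closure --- provably cannot work for skew Hadamard difference sets.
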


This Theorem gives us a lower bound for the number of non-isomorphic designs obtained from the difference sets constructed in the previous sections. The number of distinct designs is $4{q\choose\frac{q+1}{2}}$. Each design is $A$-invariant. Therefore each orbit of $\bN_{\aut(V)}(A)$ contains at most $[\bN_{\aut(V)}(A):A]$ designs. In the next subsection we'll show that $|\bN_{\aut(V)}(A)| = n(q-1)^2 q^2$. Therefore
$[\bN_{\aut(V)}(A):A] = 2n(q-1)q$ implying that the number of non-isomorphic designs is at least
$$
\frac{4{q\choose\frac{q+1}{2}}}{2n(q-1)q} > \frac{2^{q+2}/q}{2nq(q-1)} > \frac{2^{q+1}}{q^4}.
$$
\subsection{Computation of $\bN_{\aut(V)}(A)$}

In order to describe the group in the title
we'll write each automorphism of $V$ as $3\times 3$-matrix the entries of which are elements of the $\F_p$-algebra ${\sf End}(\F_q)$ (the algebra of $\F_p$-linear  endomorphisms of $\F_q$). The field $\F_q$ is considered as a subalgebra of ${\sf End}(\F_q)$ where the field element $\alpha$ is identified with  the endomorphism
$x\mapsto \alpha x,x\in\F_q$.

For each $\alpha,\beta\in\F_q^*$ and $\ell\in{\sf End}(\F_q)$ we set
$$
K(\alpha,\beta):=\left(
\begin{array}{ccc}
\e{\alpha} & 0 & 0\\
0 & \e{\alpha}\e{\beta} & 0\\
0 & 0 & \e{\alpha}\e{\beta}^2
\end{array}
\right),
\hat{\ell}:=\left(
\begin{array}{ccc}
\e{1} & 0 & \ell \\
0 & \e{1} & 0\\
0 & 0 & \e{1}
\end{array}
\right)
$$
Notice that $K:=\{K(\alpha,\beta)\,|\,\alpha,\beta\in\F_q^*\}$, $L:=\{\hat{\ell}\,|\,\ell\in{\sf End}(\F_q)\}$
are abelian subgroups of ${\sf Aut}(V)$. The first one is isomorphic to $\Z_{q-1}\times\Z_{q-1}$ while
the second one is an elementary abelian group isomorphic to $({\sf End}(\F_q),+)$. A direct check shows that both subgroups normalize $E$. Moreover $[E,L]=1$.

Another subgroup of ${\sf Aut}(V)$ normalizing $E$ comes from field automorphisms. More precisely  each $f\in {\sf Gal}(\F_q/\F_p)$ induces an $\F_p$-linear automorphism of $V$: $(x,y,z)\mapsto (f(x),f(y),f(z))$. We denote this automorphism by the same letter $f$. The subgroup of ${\sf Aut}(V)$ consisting of all Galois automorphisms $f\in {\sf Gal}(\F_q/\F_p)$ will be denoted as $F$. So, $F$ is a cyclic subgroup of ${\sf Aut}(V)$ of order $n$. A direct check shows that
$$
[F,E]\leq E, [K,E]\leq E, [L,E]=1, [F,K]\leq K,[F,L]\leq L,[K,L]\leq L
$$
In particular, any two subgroups from the list $\{E,F,K,L\}$ are permutable. Therefore $FKEL$ is a subgroup of
${\sf Aut}(V)$. It's order is equal to $|F||K||E||L| = n (q-1)^2 q^{n+1}$.

\begin{prop}\label{norm}  It holds that
\begin{enumerate}
\item ${\bN}_{\aut(V)}(E) = FKLE$;
\item  ${\bf N}_{\aut(V)}(SE)=FKEU$ where $U:=\{\hat{\gamma}\,|\,\gamma\in\F_q\}$;
\end{enumerate}
\end{prop}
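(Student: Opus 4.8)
For (1) and (2) the forward inclusions $FKLE\subseteq\bN_{\aut(V)}(E)$ and $FKEU\subseteq\bN_{\aut(V)}(SE)$ are immediate from the commutation relations recorded just before the proposition: for the second one we only add the observations that a Galois map preserves squares (so $F$ normalizes $S$), and that the $\F_q$-multiplications making up $K$ and the field element in $U$ commute with the scalar multiplications making up $S$ (so $K$ and $U$ centralize $S$); together with the fact that $E\le SE$ is central, this gives $FKE,U\subseteq\bN_{\aut(V)}(SE)$. The real content is the two reverse inclusions.

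For (1) the plan is to expose a characteristic flag of $E$ and then solve the resulting entrywise identities. Writing $E(x)=I+xN+\tfrac{x^2}{2}N^2$, where $N$ carries the identity endomorphism in positions $(1,2)$ and $(2,3)$, one has $\fix(E)=\F_q e_1$ and $[E,V]=\F_q e_1\oplus\F_q e_2$; both subspaces are preserved by every $g\in\bN_{\aut(V)}(E)$, so in the standard basis $g$ is block upper triangular with invertible diagonal blocks $a,b,c\in{\sf End}(\F_q)$. Conjugation induces an additive automorphism of $E$, i.e.\ $gE(x)=E(\phi(x))g$ with $\phi\in{\sf End}(\F_q)^{\times}$. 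Comparing the $(1,2)$, $(2,3)$ and $(1,3)$ entries and evaluating at $t\in\F_q$ yields
\begin{equation*}
a(xt)=\phi(x)\,b(t),\qquad b(xt)=\phi(x)\,c(t)\qquad(x,t\in\F_q),
\end{equation*}
together with one further relation coming from the $(1,3)$ entry.

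Setting $t=1$ and $x=1$ in the first identity and cancelling the nonzero field element $b(1)$ forces $\phi(xt)=\phi(1)^{-1}\phi(x)\phi(t)$, so that $\psi:=\phi(1)^{-1}\phi$ is an additive and multiplicative bijection fixing $1$, hence $\psi\in{\sf Gal}(\F_q/\F_p)$; the same computation shows that $a,b,c$ are each of the shape (field-multiplication)$\circ\,\psi$, with the three field elements in geometric progression. Thus the diagonal of $g$ coincides with that of a suitable element of $KF$, and after multiplying $g$ by the inverse of that element (which again lies in $\bN_{\aut(V)}(E)$) we may assume $a=b=c=\mathrm{id}$ and $\phi=\mathrm{id}$. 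The surviving $(1,3)$-relation then collapses to $d\circ x=x\circ h$ for all $x$, where $d,h$ are the $(1,2)$- and $(2,3)$-entries; this forces $d=h$ to be multiplication by a single field element $\delta$, while the $(1,3)$-entry $e$ remains free. A direct check identifies the residual matrix with $E(\delta)\hat\ell$ for $\ell=e-\delta^2/2\in{\sf End}(\F_q)$, i.e.\ an element of $EL$. Hence $g\in KF\cdot EL\subseteq FKLE$, proving (1).

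For (2), since $SE$ is abelian of order $q\cdot\frac{q-1}{2}$ with $q$ coprime to $\frac{q-1}{2}$, its Sylow $p$-subgroup $E$ is unique, hence characteristic, so $\bN_{\aut(V)}(SE)\subseteq\bN_{\aut(V)}(E)=FKLE$. The recorded relations make $L$ normal in $FKLE$ with complement $FKE$, so each element of $FKLE$ is $w\hat\ell$ with $w\in FKE$ and $\hat\ell\in L$; as $FKE\subseteq\bN_{\aut(V)}(SE)$, membership reduces to deciding which $\hat\ell\in L$ normalize $SE$. A one-line computation gives that $\hat\ell\,(sI_3)\,\hat\ell^{-1}$ is $sI_3$ with its $(1,3)$-entry increased by the commutator $[\ell,s]$ (here $s$ denotes multiplication by $s$), all other entries unchanged; membership in $SE$ forces $[\ell,s]=0$ for every square $s$. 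The set of $t\in\F_q$ whose multiplication commutes with $\ell$ is a subfield of $\F_q$ containing all $\frac{q-1}{2}$ nonzero squares, and since $\frac{q-1}{2}$ exceeds the order of any proper subfield (the only delicate value, $q=3$, having no proper subfield at all), this subfield is all of $\F_q$. Thus $\ell$ commutes with every multiplication, so $\ell$ is itself multiplication by $\ell(1)\in\F_q$ and $\hat\ell\in U$. Therefore $\bN_{\aut(V)}(SE)\subseteq FKEU$, which together with the forward inclusion gives (2).

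The main obstacle is the middle step of (1): extracting from the entrywise identities the normal form in which $\phi$ is a field element times a Galois automorphism and the diagonal blocks of $g$ are field-multiplications twisted by one common Galois map. Once this is established, the flag reduction, the semidirect decomposition used in (2), and the subfield count are all routine.
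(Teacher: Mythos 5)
Your proposal is correct and takes essentially the same route as the paper: the $\bN_{\aut(V)}(E)$-invariant flag $\mathbf{C}_V(E)\subset [E,V]$ gives the block upper-triangular form, the entrywise identities coming from $gE(x)g^{-1}=E(\phi(x))$ force the diagonal blocks to be field multiplications twisted by one common Galois automorphism (your $\psi:=\phi(1)^{-1}\phi$ trick just derives explicitly the normalizer-of-$\F_q$-in-${\sf End}(\F_q)$ fact that the paper invokes), and the residual unipotent part lands in $EL$. Part (2) likewise matches the paper's reduction (there via coprimality of $S$ and $E$, in your case via $E$ being characteristic in $SE$) to deciding which elements of $L$ normalize $S$, where your commutator-plus-subfield count is a correct filling-in of the paper's ``straightforward computation'' $\bN_{L}(S)=U$.
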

\begin{proof}
{\sc Part (1).}
The subgroups ${\bf C}_V(E) = \{(x,0,0)^t|x\in \F_q\}, [E,V] = \{(x,y,0)^t|x,y\in \F_q\}$ are
${\bf N}_{\aut(V)}(E)$-invariant. Therefore each element $N\in {\bf N}_{\aut(V)}(E)$ has the following form
$$
 N =
\left(
\begin{array}{ccc}
a & b & c \\
0 & d & e \\
0 & 0 & f
\end{array}
\right)\mbox{ where } a,d,f\in {\sf GL}(\F_q)\mbox{ and } b,c,e\in {\sf End}(\F_q).
$$
Since $E\cong \F_q$, there exists an automorphism $n\in{\sf GL}(\F_q)$ such that
$$
\forall_{\alpha\in \F_q} \ N E(\alpha)  N^{-1} = E(n(\alpha)) \iff N E(\alpha) = E(n(\alpha)) N.
$$
After multiplication and equating we obtain the following equations which hold for each $\alpha\in\F_q$
\begin{equation}\label{eq3}
\left\{
\begin{array}{rcl}
a\alpha & = & n(\alpha) d,\\
d\alpha & = & n(\alpha) f,\\
\frac{1}{2} a\alpha^2 + b\alpha & = & \frac{1}{2} n(\alpha)^2 f + n(\alpha) e
\end{array}\right.\iff
\left\{
\begin{array}{rcl}
a\alpha & = & n(\alpha) d,\\
d\alpha& = & n(\alpha) f,\\
 b\alpha & = & n(\alpha) e
\end{array}\right.
\end{equation}
After substitution $\alpha =1$  we obtain $a = \beta d,d=\beta f,b=\beta e$
where $\beta:=n(1)$. Now the equations~\eqref{eq3} yield us
\begin{equation}\label{eq4}
\forall_{\alpha\in\F_q}
\left\{
\begin{array}{rcl}
d\alpha d^{-1} & = & \e{\beta}^{-1}n(\alpha),\\
f\alpha f^{-1} & = & \e{\beta}^{-1}n(\alpha),\\
 e\alpha e^{-1} & = & \e{\beta}^{-1}n(\alpha)
\end{array}\right.
\end{equation}
Thus each of the elements $e,d,f$ normalizes the subalgebra $\e{\F_q}$ of ${\sf End}(\F_q)$. Therefore each of them may be written in a form $e = \e{\epsilon}e_0,d=\e{\delta}d_0,f=\e{\phi}f_0$ where $e_0,d_0,f_0\in {\sf Gal}(\F_q/\F_p)$ and $\epsilon,\delta,\phi\in\F_q$. It follows from~\eqref{eq4} that conjugation by $a,d,f$ induce the same automorphism of $\e{\F_q}$. Therefore $e_0 = d_0 = f_0$. Thus we obtain the following
$$
a = \e{\beta} d = \e{\beta}^2 f = \e{\beta^2\phi} f_0;
d = \e{\beta} f = \e{\beta\phi} f_0;
e = \e{\epsilon} f_0, b = \e{\beta}e = \e{\beta\epsilon} f_0
$$
implying that
$$
N =
\left(
\begin{array}{ccc}
 \e{\beta^2\phi} f_0 &  \e{\beta\epsilon} f_0 & c \\
0 & \e{\beta\phi} f_0 &  \e{\beta\epsilon} f_0 \\
0 & 0 & \e{\phi} f_0
\end{array}
\right)\in FKEL
$$

{\sc Part (2).} Since $SE$ is a coprime product of $S$ and $E$, we conclude $\bN_{\aut(V)}(SE) = \bN_{\aut(V)}(S)\cap \bN_{\aut(V)}(E) = \bN_{FKEL}(S)$.
Since $F$, $K$ and $E$ normalize $S$, we can write  $\bN_{FKEL}(S)= (FKE)\bN_{L}(S)$. So, it remains to find $\bN_{L}(S)$. A straightforward  computation shows that  $\bN_{L}(S) = U$.

 \qed
\end{proof}

\subsection{An automorphism group of a concrete translation design}

Let us fix a function $\veps:I\rightarrow \{\pm 1\}$ such that $D := D_\veps\in\DD$. We also set
$ J:=J_\veps$, $\cD:=\des(V,D)$,
$G:=\aut(\cD)$ . We also denote by $F_p$ a unique Sylow $p$-subgroup of $F$.

Since $F$ normalizes $U$, it's Sylow $p$-subgroup $F_p$ normalizes $U$ as well. Therefore $F_pU = UF_p$ is a subgroup of $\aut(V)$.
Let  $Q:=(UF_p)_D$ be a setwise stabilizer of $D$ in $UF_p $. Since $UF_p\cong \F_q\rtimes F_p$, the group $Q$ is a subgroup of $\F_q\rtimes F_p$.
\begin{prop}\label{Q}
The subgroup $Q$ consists of all products $\hat{\alpha}f,\alpha\in\F_q,f\in F_p$ which satisfies the condition
$f(J) + \alpha = J$.
\end{prop}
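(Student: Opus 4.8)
The plan is to parametrize $UF_p$ and then compute how its elements permute the non-zero $A$-orbits. Since $F$ normalizes $U$ (conjugation by $f\in F$ sends $\hat\gamma$ to $\widehat{f(\gamma)}$), the group $UF_p$ is the semidirect product $U\rtimes F_p\cong\F_q\rtimes F_p$, so every element is written uniquely as $\hat\alpha f$ with $\alpha\in\F_q$ and $f\in F_p$. By Proposition~\ref{norm} both $U$ and $F$ lie in $\bN_{\aut(V)}(SE)=\bN_{\aut(V)}(A)$, hence each $\hat\alpha f$ normalizes $A$ and therefore permutes the orbits $\{O_i,-O_i\mid i\in I\}$.

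First I would determine the induced permutation of the index set $I$ by acting on the standard representatives. With automorphisms acting on column vectors on the left, $\hat\alpha f$ sends $w\mapsto\hat\alpha(f(w))$. Applied to $(i,0,1)^t\in O_i$ for $i\in\F_q$ this yields $(f(i)+\alpha,0,1)^t\in O_{f(i)+\alpha}$; applied to $(0,1,0)^t\in O_\infty$ it yields $(0,1,0)^t$, and applied to $(1,0,0)^t\in O_\bullet$ it yields $(1,0,0)^t$. As a field automorphism preserves the set of non-zero squares, $O_\infty$ and $O_\bullet$ are fixed setwise. Thus $\hat\alpha f$ acts on $I$ by the bijection $\phi$ with $\phi(i)=f(i)+\alpha$ for $i\in\F_q$ and $\phi(\infty)=\infty$, $\phi(\bullet)=\bullet$, sending $O_i\mapsto O_{\phi(i)}$. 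Being $\F_p$-linear, $\hat\alpha f$ is additive, so it also maps $-O_i\mapsto-O_{\phi(i)}$.

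Next I would translate setwise invariance of $D=D_\veps=\bigcup_{i\in I}\veps(i)O_i$ into a condition on $\veps$. From the previous step $\hat\alpha f$ carries $\veps(i)O_i$ to $\veps(i)O_{\phi(i)}$, so after reindexing $(\hat\alpha f)(D)=\bigcup_{k}\veps(\phi^{-1}(k))O_k$. Since each pair $\{O_k,-O_k\}$ contributes exactly one orbit to a set in $\DD$, comparison with $D=\bigcup_k\veps(k)O_k$ shows that $\hat\alpha f$ stabilizes $D$ iff $\veps\circ\phi=\veps$ on all of $I$. The equalities at $\infty$ and $\bullet$ are automatic because $\phi$ fixes these indices, so the condition reduces to $\veps(f(i)+\alpha)=\veps(i)$ for every $i\in\F_q$. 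Recalling $J=\{i\in\F_q\mid\veps(i)=1\}$, this is precisely $f(J)+\alpha=J$, as claimed.

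No step here presents a genuine obstacle; the whole content is the orbit computation of the second paragraph. The only points demanding care are the order of composition in the product $\hat\alpha f$ (which pins down the exact shape $f(J)+\alpha=J$ rather than $f(J+\alpha)=J$), the additivity of $\hat\alpha f$ needed to track the negative orbits $-O_i$, and the remark that the $\infty$- and $\bullet$-values of $\veps$ impose no constraint since both orbits are fixed by every element of $UF_p$.
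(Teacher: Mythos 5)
Your proof is correct and follows essentially the same route as the paper: compute how $\hat\alpha$ and $f$ permute the orbits $O_i$ (the paper simply states $\hat\alpha O_i = O_{i+\alpha}$, $fO_i = O_{f(i)}$, with $O_\infty$ and $O_\bullet$ fixed), then translate $\hat\alpha f D = D$ into $\veps(f(i)+\alpha)=\veps(i)$ for all $i\in\F_q$, which is exactly $f(J)+\alpha=J$. Your extra care in justifying the orbit action via normalization of $A$ and in tracking the orbits $-O_i$ only makes explicit what the paper leaves implicit.
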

\begin{proof} It follows from the definition of $U$ that for each $\hat{\alpha}\in U,\alpha\in\F_q$
and $i\in\F_q$ the following equalities hold
$$
\hat{\alpha}O_i = O_{i+\alpha},\hat{\alpha}O_\infty = O_\infty,\hat{\alpha}O_\bullet = O_\bullet.
$$
Also for each $f\in F$
$$
fO_i = O_{f(i)}, fO_\infty = O_\infty,fO_\bullet = O_\bullet.
$$
Hence
$$
\hat{\alpha}fD = \left(\bigcup_{i\in F_q} \veps(i)O_{f(i)+\alpha}\right) \bigcup \veps(\infty) O_\infty \bigcup \veps(\bullet) O_\bullet.
$$
Therefore $\hat{\alpha}fD = D$ holds if and only if $\veps(f(i)+ \alpha)=\veps(i)$ for each $i\in\F_q$. This is equivalent to $f(J) + \alpha = J$.\qed
\end{proof}

The subgroup $Q\cap U$ consists of those $\hat{\alpha},\alpha \in\F_q$ which satisfy $J+ \alpha= J$. Thus $J$ is a union of $\langle \alpha\rangle$-cosets. Since $|J|$ is coprime to $p$, we conclude that $Q\cap U$ is trivial. This implies that $Q$ is embedded into $F_p$. In particular, $Q$ is a cyclic $p$-group.

Since $F$ and $U$ normalize $EV_+$, the group $Q$ normalizes $EV_+$ too. Therefore $QEV_+$ is a subgroup of $G$.
\begin{thm}\label{sylow} $QEV_+$ is a Sylow $p$-subgroup of $G$.
\end{thm}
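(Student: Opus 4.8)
The plan is to compute the $p$-part of $|G|$ and match it with $|QEV_+|$. Since $V_+\le QEV_+\le G$ acts regularly on the $q^3$ points of $V$, the group $G$ is transitive and $|G|=q^3|G_0|$, where $G_0$ is the stabiliser of the point $0$. A direct check (the elements of $Q$ and of $E$ are linear and fix $0$, while $V_+$ consists of translations) gives $|QEV_+|=|Q|\,|E|\,|V_+|=|Q|\,q^4$, and an element of $QEV_+$ fixes $0$ iff its $V_+$-component is trivial, so $QEV_+\cap G_0=QE$ with $|QE|=|Q|\,q$. Consequently the theorem is equivalent to the statement that $QE$ is a Sylow $p$-subgroup of $G_0$: if $P$ is a Sylow $p$-subgroup of $G$ containing $QEV_+$ and $P_0:=P\cap G_0$ (so $QE\le P_0$), then $|P|=q^3|P_0|$, and $|P|=|QEV_+|$ holds precisely when $P_0=QE$.

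Next I would locate a block fixed by $P_0$. The blocks of $\cD$ through $0$ are the sets $D+w$ with $w\in -D$, and there are $|D|=\tfrac{q^3-1}{2}$ of them; since $q\equiv 0\pmod p$ we have $q^3-1\equiv-1\pmod p$, so this number is prime to $p$ and the $p$-group $P_0$ fixes one such block $B$. Applying Proposition~\ref{pointblock} to the group $P$ (which contains $V_+$ and satisfies $|\cD|=|V|$) then gives $P_0=P_B$ together with a subgroup $F\le V$ such that $\fix_V(P_0)=F$ and $\bN_P(P_0)=P_0F_+$. Because $E\le P_0$ and $\fix_V(E)=\bC_V(E)=\{(x,0,0)^t\mid x\in\F_q\}$, we obtain $F\le\bC_V(E)$, in particular $|F|\le q$.

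The remaining and decisive step is to prove $P_0=QE$, and I would split it into (i) showing that $P_0$ acts linearly, i.e. $P_0\le\aut(V)$ (equivalently $V_+\trianglelefteq P$), and (ii) pinning down, once linearity is known, the $p$-part of the linear automorphisms of $\cD$ fixing $0$. For (ii) the engine is the normaliser structure already computed: a linear $p$-element fixing $0$ and preserving $\cD$ must respect the canonical $\F_q$-flag $\bC_V(E)<[E,V]<V$ attached to $E\le P_0$, which forces it into $\bN_{\aut(V)}(E)=FKLE$ by Proposition~\ref{norm}(1); intersecting the $p$-part of this normaliser with the stabiliser of $\cD$ and using Proposition~\ref{Q} to single out exactly the shift-and-Galois elements that preserve $D$ should cut the group down to $QE$. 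For (i) the tools I would combine are that $V_+$ is self-centralising in $\sym(V)$ (being regular abelian), so $\bC_P(V_+)=V_+$; the fixed-point data $\fix_V(P_0)=F\le\bC_V(E)$ with $\bN_P(P_0)=P_0F_+$; and an inductive application of Proposition~\ref{pointblock} along the chain of $P_0$-fixed subspaces.

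I expect the linearity reduction (i) to be the main obstacle. Abstract $p$-group theory does not suffice: a Sylow $p$-subgroup of $\sym(V)$ contains many non-normal regular elementary abelian subgroups, and a self-centralising subnormal subgroup of a $p$-group need not be normal, so neither the regularity nor the self-centralising property of $V_+$ alone yields $V_+\trianglelefteq P$. The flag-preservation of the a priori purely combinatorial automorphisms in $P_0$ must therefore be forced from the design itself, using the fixed-point structure above together with the regular action of $V_+$; controlling these potentially non-linear $p$-automorphisms is where the real work lies, after which the argument reduces to the finite normaliser bookkeeping recorded in Propositions~\ref{norm} and~\ref{Q}.
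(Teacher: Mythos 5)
Your reduction of the theorem to $P_0=QE$ (where $P$ is a Sylow $p$-subgroup of $G$ containing $QEV_+$ and $P_0$ its stabilizer of $0$), and the observation that $P_0$ fixes a block through $0$ because their number $\frac{q^3-1}{2}$ is prime to $p$, are both correct. But the proposal stops exactly where the theorem is actually hard: you give no argument for your step (i), the claim that $V_+\normaleq P$, i.e.\ that $P_0$ consists of linear maps, and you state yourself that the tools you list (self-centralization of $V_+$, the fixed-point subgroup $F$, Proposition~\ref{pointblock}) do not suffice. That is a genuine gap, not a technicality. Step (ii) also has a flaw even granting linearity: an element of $\aut(V)$ preserving the flag $\bC_V(E)<[E,V]<V$ is merely block upper triangular over ${\sf End}(\F_q)$, a group far larger than $FKLE$; what Proposition~\ref{norm}(1) actually uses is that the element \emph{normalizes} $E$, and nothing in your plan forces an element of $P_0$ to normalize $E$.

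The paper gets around both difficulties by never analyzing $P$ or $P_0$ directly. It sets $N:={\bf N}_P(QEV_+)$ and invokes the normalizer-growth property of finite $p$-groups: if $QEV_+$ were proper in $P$, it would be proper in $N$, so it suffices to prove $N=QEV_+$. For $N$, linearity comes for free: by Proposition~\ref{E} (resting on Propositions~\ref{regsubgroups} and~\ref{centralizer}), $V_+=\bE(QEV_+)$ and $EV_+=\bE_2(QEV_+)$ are \emph{characteristic} in $QEV_+$, hence normal in $N$; therefore $N\leq{\bf N}_{\sym(V)}(V_+)=\aut(V)V_+$, and the stabilizer $N_0$ is a $p$-group of linear maps with $E\normaleq N_0$. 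Now Proposition~\ref{norm}(1) gives $N_0\leq FKLE$, so $N_0=(N_0\cap F_pL)E$, and Propositions~\ref{fixes} and~\ref{function_veps} (essentially the $\veps$-bookkeeping you sketched, but applied to $N_0$, where normalization of $E$ is available) give $N_0\cap F_pL=Q$, hence $N=QEV_+$. The idea missing from your proposal is precisely this combination of a Thompson-subgroup (characteristic subgroup) identification inside the known group $QEV_+$ with the normalizer-grows argument: it converts the intractable global question about an unknown Sylow subgroup of the unknown group $G$ — which your plan leaves open — into a question about ${\bf N}_P(QEV_+)$, where normality of $V_+$ and of $E$ in the stabilizer is automatic.
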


In order to prove this Theorem we first need some properties of $QEV_+$. Recall that the Thompson subgroup  $\bE(P)$ of a $p$-group $P$ is the subgroup generated by all elementary abelian subgroups of $P$ of maximal order.
Clearly $\bE(P)$ is characteristic in $P$. Also the subgroup $\bE_2(P)$ defined by $\bE_2(P)/\bE(P) = \bE(P/\bE(P))$ is characteristic in $P$.
\begin{prop}\label{regsubgroups}
Every elementary abelian subgroup of $EV_+$ of order $\geq q^2p$ is contained in $V_+$. In particular,  $\bE(EV_+)=V_+$.
\end{prop}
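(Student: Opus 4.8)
The plan is to prove the sharper dichotomy that for an elementary abelian (indeed, any abelian) subgroup $W\le EV_+$ one has either $W\le V_+$ or $|W|\le q^2$; since $q^2<q^2p$, this is exactly what is needed. First I would record the coarse structure of $EV_+$. As $V_+$ is a normal elementary abelian subgroup of order $q^3$ and the ``linear part'' map $\pi\colon EV_+\to E$, sending an affine transformation $w\mapsto E(x)w+s$ to $E(x)$, has kernel $V_+$, we get $EV_+/V_+\cong E\cong(\F_q,+)$ of order $q$. Consequently, for every subgroup $W$ we have the factorization $|W|=|\pi(W)|\cdot|W\cap V_+|$, with $|\pi(W)|\le|E|=q$.

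The heart of the argument is a centralizer computation in $V_+$. Assuming $W\not\le V_+$, the image $\pi(W)$ is nontrivial, so $W$ contains an element $g\colon w\mapsto E(x)w+t$ with $x\ne 0$. Since $W$ is abelian, $W\cap V_+$ must centralize $g$; a short calculation shows that a translation by $s$ commutes with $g$ if and only if $E(x)s=s$, i.e. $s\in\fix_V(E(x))$. Writing $E(x)-I=xN+\tfrac{x^2}{2}N^2$ with $N$ the standard nilpotent Jordan block, one checks that $E(x)-I$ has rank $2$ for every $x\ne 0$, so its kernel is the $1$-dimensional space $\bC_V(E)=\{(a,0,0)^t\,|\,a\in\F_q\}$ of order $q$ (the fixed space identified in the proof of Proposition~\ref{norm}). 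Hence $W\cap V_+\le\bC_V(E)$ and $|W\cap V_+|\le q$.

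Combining the two bounds gives $|W|=|\pi(W)|\cdot|W\cap V_+|\le q\cdot q=q^2<q^2p$, which proves the first assertion. For the ``in particular'' part I would argue as follows: $V_+$ is itself elementary abelian of order $q^3\ge q^2p$, so by the first part every elementary abelian subgroup of order $\ge q^2p$ is contained in $V_+$; thus the maximal order of an elementary abelian subgroup of $EV_+$ is exactly $q^3$, and it is attained only by $V_+$, since any elementary abelian subgroup of order $q^3$ lies in the order-$q^3$ group $V_+$ and therefore equals it. Since $\bE(EV_+)$ is generated by the elementary abelian subgroups of maximal order, we conclude $\bE(EV_+)=V_+$.

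I do not expect a genuine obstacle here; the single point requiring care is the uniform rank computation, namely that $\mathrm{rank}(E(x)-I)=2$ (equivalently $\dim_{\F_q}\fix_V(E(x))=1$) for \emph{all} $x\ne 0$, so that the bound $|W\cap V_+|\le q$ holds no matter which nontrivial $g\in W$ is chosen to play the role above.
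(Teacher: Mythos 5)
Your proof is correct and takes essentially the same route as the paper's: both arguments rest on the index-$q$ factorization $|W| = |\pi(W)|\cdot|W\cap V_+|$ coming from $EV_+/V_+\cong E$, together with the fact that a non-identity element of $E$ fixes only the $q$ vectors $(a,0,0)^t$, so that commutativity forces $|W\cap V_+|\le q$. The paper packages this as a contradiction (an element $t\in T\setminus V_+$ would have to centralize at least $qp$ elements of $V_+$, but can centralize only $q$), whereas you run the same computation directly to get the sharper bound $|W|\le q^2$ for any abelian subgroup not inside $V_+$; the difference is only contraposition, plus your explicit verification of the rank-$2$ claim that the paper leaves as a one-line assertion.
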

\begin{proof}
Let $T$ be an elementary abelian subgroup of $EV_+$ of order $\geq q^2p$. Assume towards a contradiction that $T\not\leq V_+$.
It follows from $|V_+\cap T|\geq qp$ that $|{\bf C}_{V_+}(t) |\geq qp$ for each $t\in T$. Take $t\in T\setminus V_+$. Then $t = e v_+$ for some $e\in E\setminus\{1\}$ and $v\in V$ implying
${\bf C}_{V_+}(t)={\bf C}_{V_+}(e)$. But any non-identical element of $E$ centralizes $q$ elements of ${V_+}$. A contradiction.\qed
\end{proof}
\begin{prop}\label{centralizer} If $g\in FEV_+\setminus V_+$, then the index of $\bC_{V_+}(g)$ in $V_+$ is at least $p^2$.
\end{prop}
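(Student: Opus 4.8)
The plan is to reduce the statement to a rank computation for the linear part of $g$. Since $FE$ acts on $V$ by $\F_p$-linear maps and normalizes the translation group $V_+$, I would write $g = h\,w_+$ with $h\in FE$ and $w\in V$; the hypothesis $g\notin V_+$ is then exactly the condition $h\neq 1$. Conjugating a translation $u_+$ by $g$ yields the translation $(h(u))_+$, so that $\bC_{V_+}(g)=\{u_+\mid h(u)=u\}=\fix_V(h)$ and hence
$$
[V_+:\bC_{V_+}(g)]=\frac{|V|}{|\fix_V(h)|}=p^{\,\mathrm{rank}_{\F_p}(h-1)} .
$$
Thus everything comes down to proving $\mathrm{rank}_{\F_p}(h-1)\geq 2$ for every nontrivial $h\in FE$.

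Next I would split according to the $F$-component of $h=fE(x)$, using $FE=E\rtimes F$. If $f=1$ then $h=E(x)$ with $x\neq0$, and a direct inspection of the fixed-point equations shows $\fix_V(E(x))=\{(a,0,0)^t\mid a\in\F_q\}$, so $\mathrm{rank}_{\F_p}(h-1)=2n\geq 2$; this case is already implicit in the centralizer count of Proposition~\ref{regsubgroups}. The interesting case is $f\neq 1$. Here I would exploit the $h$-invariant flag $V_1\subset V_2\subset V$, where $V_1=\{(a,0,0)^t\}$ and $V_2=\{(a,b,0)^t\}$: both $E(x)$ (unipotent upper-triangular) and $f$ (acting coordinatewise) preserve it. On the top quotient $V/V_2\cong\F_q$ the map $E(x)$ acts trivially, so $h$ induces the Galois map $f$. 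Composing $h-1$ with the projection $\pi\colon V\to V/V_2$ gives $\pi\circ(h-1)=(f-1)\circ\pi$, whence
$$
\mathrm{rank}_{\F_p}(h-1)\ \geq\ \mathrm{rank}_{\F_p}\!\bigl(\pi\circ(h-1)\bigr)\ =\ \mathrm{rank}_{\F_p}(f-1)\ =\ n-\dim_{\F_p}\fix(f).
$$
Writing $\fix(f)=\F_{p^d}$ with $d\mid n$ and $d<n$ (as $f\neq 1$), the codimension in question is $n-d$.

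The final step — the only place where a genuine hypothesis enters — is the arithmetic fact $n-d\geq 2$. The plan is to observe that $n-d=1$ would force $d=n-1\mid n$, i.e.\ $(n-1)\mid 1$, i.e.\ $n=2$; but $q=p^n\equiv 3\pmod 4$ with $p\equiv 3\pmod 4$ forces $n$ odd, excluding $n=2$. Hence $n-d\geq 2$, and combining the two cases gives $\mathrm{rank}_{\F_p}(h-1)\geq 2$, so $[V_+:\bC_{V_+}(g)]\geq p^2$.

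I expect the main obstacle to be less the global logic than the careful bookkeeping of the semilinear part: one must verify precisely that $h$ induces $f$ (and not some twisted map) on $V/V_2$, and that the $\F_p$-rank of $f-1$ on $\F_q$ is exactly $n-d$ because $\ker(f-1)=\fix(f)=\F_{p^d}$. Once the semilinear action is tracked correctly, the reduction to the single quotient $V/V_2$ together with the divisibility argument (which is where the oddness of $n$ is indispensable) closes the proof.
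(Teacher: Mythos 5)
Your proof is correct and follows essentially the same route as the paper: both reduce $\bC_{V_+}(g)$ to the fixed space of the linear part $h=fe$, split on whether the Galois component $f$ is trivial, and in the nontrivial case exploit that $E$ acts trivially on a copy of $\F_q$ so that the problem becomes bounding $[\F_q:\fix_{\F_q}(f)]$. The only differences are cosmetic: you project onto the quotient $V/V_2$ where the paper restricts to the $E$-fixed subspace $W=\{(x,0,0)^t\,|\,x\in\F_q\}$, and you spell out the divisibility-plus-oddness argument giving $[\F_q:\fix_{\F_q}(f)]\geq p^2$, a point the paper asserts without proof (and even misstates as a strict inequality, which fails for $n=3$).
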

\begin{proof} Let $g = fev_+$ be an element from $FEV_+\setminus V_+$ where $f\in F,e\in E,v\in V$. Then $\bC_{V_+}(g) = \bC_{V_+}(fe) = (\fix_V(fe))_+$. Thus we have to prove that $[V:\fix_V(fe)]\geq p^2$.

 If $f=1$, then $\fix_V(e) = W$ where $W:= \{(x,0,0,)^t\,|\,x\in \F_q\}$ and we are done

If $f\neq 1$, then the intersection $\fix_V(fe)\cap W = \fix_W(f)$ consists of those vectors $(x,0,0,)^t$ which satisfy $f(x) = x$. Hence
$$[V:\fix_V(fe)] \geq [W:\fix_W(fe)] = [W:\fix_W(f)] = [\F_q:\fix_{\F_q}(f)] > p^2.$$
\qed
\end{proof}
\begin{prop}\label{E} $\bE(QEV_+)=V_+,\bE(QE)=E$ and $\bE_2(QEV_+)=EV_+$
\end{prop}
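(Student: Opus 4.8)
The plan is to analyze everything through the homomorphism $M\colon QEV_+\to\aut(V)$ that sends an affine map to its linear part. Its kernel is exactly $V_+$ and its image is $QE$, so $QEV_+/V_+\cong QE$; moreover a translation $w_+$ commutes with $g\in QEV_+$ precisely when $M(g)w=w$. Hence for any elementary abelian $T\le QEV_+$, writing $\bar T:=M(T)\le QE$, the homomorphic image $\bar T$ is again elementary abelian and $T\cap V_+\le \fix_V(\bar T)_+$, giving the basic inequality $|T|=|\bar T|\,|T\cap V_+|\le |\bar T|\,|\fix_V(\bar T)|$. All three assertions will be extracted from this inequality together with an understanding of the elementary abelian subgroups of $QE$ and of the fixed spaces $\fix_V(\bar T)$.

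I would prove the middle assertion $\bE(QE)=E$ first, since the others rest on it. The group $QE$ is a semidirect product $E\rtimes Q$: indeed $[E,U]=1$ and $fE(x)f^{-1}=E(f(x))$ show that $Q$ acts on $E\cong(\F_q,+)$ through its image in $\mathrm{Gal}(\F_q/\F_p)$, and $Q\cap U=1$ makes this action faithful, with $Q$ cyclic. If $Q=1$ there is nothing to prove, so assume $Q\ne1$, which forces $p\mid n$. If $R\le QE$ is elementary abelian with $R\not\le E$, then $RE/E$ is a nontrivial elementary abelian subgroup of the cyclic group $QE/E\cong Q$, so $|RE/E|=p$; choosing $z\in R$ of order $p$ whose image $g$ in the Galois group has order $p$, one gets $R\cap E\le \bC_E(z)=E(\fix_{\F_q}(g))$. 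The crux is the field-theoretic computation $|\fix_{\F_q}(g)|=p^{n/p}$ (the fixed field of an order-$p$ element of $\mathrm{Gal}(\F_q/\F_p)$ has degree $n/p$ over $\F_p$), which gives $|R|\le p\cdot p^{n/p}<p^n=|E|$, since $p\ge3$ and $p\mid n$ force $n(p-1)>p$. Thus $E$ is the unique elementary abelian subgroup of $QE$ of maximal order, and $\bE(QE)=E$; the same argument also records the bound $|R|\le p^{1+n/p}$ for every elementary abelian $R\le QE$ with $R\not\le E$.

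Next I would establish $\bE(QEV_+)=V_+$. Let $T\le QEV_+$ be elementary abelian with $\bar T\ne 1$; I claim $|T|<q^3=|V_+|$. If $\bar T\le E$ then $T\le EV_+$ with $T\not\le V_+$, and Proposition~\ref{regsubgroups} gives $|T|<q^2p\le q^3$. If $\bar T\not\le E$, pick $h\in\bar T$ whose Galois part $g$ is nontrivial (hence of order at least $p$); since $M(h)-\mathrm{Id}$ is block upper triangular with diagonal blocks $g-\mathrm{Id}$ acting on $\F_q$, its kernel has $\F_p$-dimension at most $3\dim_{\F_p}\fix_{\F_q}(g)\le 3n/p$, so $|\fix_V(\bar T)|\le|\fix_V(h)|\le p^{3n/p}$. (Proposition~\ref{centralizer} does not apply directly here, as $Q\not\le FEV_+$, so this estimate must be made by hand.) Combined with $|\bar T|\le p^{1+n/p}$ from the previous paragraph, this yields $|T|\le p^{1+4n/p}<p^{3n}=q^3$, again using $p\ge3$. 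Since $V_+$ itself is elementary abelian of order $q^3$, the maximal order is $q^3$ and is attained only when $\bar T=1$, i.e. only by $V_+$. Hence $\bE(QEV_+)=V_+$.

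Finally, $\bE_2(QEV_+)=EV_+$ is immediate from the definition $\bE_2(P)/\bE(P)=\bE(P/\bE(P))$: with $P=QEV_+$ and $\bE(P)=V_+$ we have $P/V_+\cong QE$, under which $EV_+/V_+$ corresponds to $E$, and by the middle assertion $\bE(P/V_+)=\bE(QE)=E$; pulling back gives $\bE_2(P)=EV_+$. The main obstacle throughout is the third paragraph: making the fixed-space estimates sharp enough. This hinges on computing the fixed field of a Galois element correctly (its size is $p^{n/p}$, not $q/p$) and on bounding the fixed space of a Galois-twisted unipotent operator on $V$; once these are in hand, the hypothesis $p\ge3$ makes every non-trivial case strictly smaller than $q^3$.
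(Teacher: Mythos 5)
Your proof is correct, and it takes a noticeably different route from the paper's. The paper fixes a maximal elementary abelian subgroup $Y\le QEV_+$ (so $|Y|\ge q^3$), uses the cyclic quotient $QEV_+/EV_+\cong Q$ to get $|Y\cap EV_+|\ge |Y|/p\ge q^2p$, invokes Proposition~\ref{regsubgroups} to force $Y\cap EV_+\le V_+$, and then derives a contradiction from Proposition~\ref{centralizer} when $Y\not\le V_+$; the middle claim $\bE(QE)=E$ is simply asserted there (``$E$ is the only elementary abelian $p$-subgroup of maximal order''), with no computation. You instead run everything through the linear-part homomorphism and the inequality $|T|\le|\bar{T}|\,|\fix_V(\bar{T})|$, prove $\bE(QE)=E$ first via the fixed-field bound $|R|\le p^{1+n/p}<p^n$, and then eliminate every elementary abelian $T$ with $\bar{T}\ne 1$ by a case split: Proposition~\ref{regsubgroups} when $\bar{T}\le E$, and the hand-made estimate $|\fix_V(h)|\le p^{3n/p}$ from the block-triangular kernel bound otherwise. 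Two things your route buys are worth recording. First, your parenthetical objection to the paper's method is accurate: Proposition~\ref{centralizer} covers only $FEV_+\setminus V_+$, whereas the elements $y\in Y\setminus V_+$ arising in the paper's argument lie in $QEV_+\setminus EV_+$ and in general have a nontrivial $U$-component (since $Q\le UF_p$ with $Q\cap U=1$, not $Q\le F_p$), so the appeal is not literally licensed --- although the proof of that proposition does extend verbatim, because $U$, $L$ and $E$ all act trivially on $W$, giving $[V:\fix_V(\hat{\alpha}fe)]\ge[W:\fix_W(f)]\ge p^2$; your kernel estimate is a clean substitute that avoids the issue entirely. Second, you supply the missing argument for $\bE(QE)=E$, which the paper leaves as an assertion. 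The cost of your approach is that the numerics ($p^{1+n/p}$, $p^{3n/p}$, and the inequality $1+4n/p<3n$) are more delicate than the paper's crude counting; the gain is a self-contained proof that never quotes a proposition outside its stated hypotheses.
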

\begin{proof}
Let $Y$ be an elementary abelian subgroup of $QEV_+$ of maximal order. Then $|Y|\geq q^3$.
We are going to prove that $Y=V_+$.

 Since $QEV_+/(EV_+)\cong Q$ is cyclic and $Y$ is elementary abelian, the image of $Y$ in the factor-group $QEV_+/(EV_+)$ has order at most $p$. Therefore $|Y\cap EV_+|\geq |Y|/p \geq q^3/p$.
If $q > p$, then $|Y\cap EV_+|\geq q^2p$. If $q=p$, then $Q$ is trivial and $|Y\cap EV_+| = |Y|\geq q^3$.
Thus in any case $|Y\cap EV_+| \geq q^2p$.
By Proposition~\ref{regsubgroups} $Y\cap EV_+\leq V_+$ implying that
$|Y\cap V_+| = |Y\cap EV_+|\geq |Y|/p\geq q^3/p$. If $Y$ is contained in $V_+$, then $Y=V_+$ and we are done. If $Y\not\leq V_+$, then an element $y\in Y\setminus V_+$ centralizes at least $q^3/p$ elements of $V_+$, contrary to Proposition~\ref{centralizer}. Thus $Y = V_+$, and, consequently, $\bE(QEV_+) = V_+$

Consider now the factor-group $QEV_+/V_+\cong QE$. The group $E$ is isomorphic to $\F_q$, while $Q$ is a cyclic $p$-group acting on $E$ as a group of field automorphisms. In this case $E$ is the only elementary abelian $p$-subgroup of maximal order. Therefore $\bE(QE)=E$ and $\bE_2(QEV_+)=EV_+$.\qed
\end{proof}\medskip

\noindent {\bf Proof of Theorem~\ref{sylow}.}\\
Let $P$ be a Sylow $p$-subgroup of $G$ containing $QEV_+$
and $N:={\bf N}_P(QEV_+)$. By Proposition~\ref{E} the subgroups $V_+$ and $EV_+$ are characteristic in $QEV_+$. Therefore both $V_+$ and $EV_+$ are normal in $N$. This implies that
 $N\leq{\bf N}_{\sym(V)}(V_+) = \aut(V) V_+$ where $\aut(V)\cong GL_{3n}(p)$. Since $EV_+\normaleq N$, the point stabilizer $N_0$ satisfies the following inequality $E\normaleq N_0 \leq \aut(V)$. By Proposition~\ref{norm} $N_0\leq FKLE$. Since $N_0$ is a $p$-group, it is contained in a Sylow's $p$-subgroup of $FKLE$. W.l.o.g. we may assume that $N_0\leq F_p LE$. It follows from $E\leq N_0\leq (F_pL)E$ that $N_0 = (N_0\cap F_p L)E$. In order to prove Theorem~\ref{sylow} it is sufficient to show that $N_0\cap F_p L = Q$. Notice that the inclusion $Q\leq N_0$ follows from the inclusion $QEV_+\leq N$.

The rest of the proof is given in the following two statements.

\begin{prop}\label{fixes} $N_0$ fixes $D$ setwise.
\end{prop}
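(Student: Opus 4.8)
The plan is to exploit that $N_0$ normalizes $E$ together with the fact that $D$, being $A$-invariant, is fixed setwise by $E$. Since $E\normaleq N_0$, every $g\in N_0$ maps an $E$-fixed block to an $E$-fixed block; hence $g(D)$ lies in the set $\fix_\cD(E)$ of blocks stabilized by $E$, and the first task is to determine this set. A block $D+w$ is $E$-invariant iff $E(x)(D+w)=D+w$ for all $x$; using $E(x)D=D$ and the fact that $V_+$ acts regularly on the blocks (so $D+v=D$ forces $v=0$), this reduces to $E(x)w=w$ for all $x$, i.e.\ $w\in\bC_V(E)=W$. Thus $\fix_\cD(E)=\{D+w\mid w\in W\}$ (in agreement with Proposition~\ref{pointblock}(3)), and consequently for each $g\in N_0$ there is some $w_g\in W$ with $g(D)=D+w_g$. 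The goal becomes to show $w_g=0$.

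Next I would localize the equation $g(D)=D+w_g$ on the line $W$. Since $g$ normalizes $E$ it preserves $W=\fix_V(E)$, so intersecting with $W$ gives $g(D\cap W)=(D\cap W)+w_g$. A direct inspection of the orbits shows that among all $\pm O_i$ only $\pm O_\bullet$ are contained in $W$ (the remaining orbits have a nonzero second or third coordinate), and since $-1$ is a non-square one has $W\setminus\{0\}=O_\bullet\cup(-O_\bullet)$. Hence $D\cap W=\veps(\bullet)O_\bullet$, and the displayed identity becomes
$$
g\bigl(\veps(\bullet)O_\bullet\bigr)=\veps(\bullet)O_\bullet+w_g .
$$

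The decisive step is to compute the restriction of $g$ to $W$. Because we have already reduced to $N_0\leq F_pLE$, every $g\in N_0$ is a product $f\hat\ell e$ with $f\in F_p$, $\hat\ell\in L$, $e\in E$; both $L$ and $E$ act trivially on $W$, and crucially the diagonal torus $K$ (the only part that could rescale $W$ by a non-square) has been excluded. Therefore $g$ acts on $W\cong\F_q$ as the field automorphism $f$, which fixes the prime field and hence preserves both the set of squares and the set of non-squares. Identifying $W$ with $\F_q$ via the first coordinate, $\veps(\bullet)O_\bullet$ corresponds to $\veps(\bullet)\F_q^{*2}$, and $f(\veps(\bullet)\F_q^{*2})=\veps(\bullet)\F_q^{*2}$; thus $g(\veps(\bullet)O_\bullet)=\veps(\bullet)O_\bullet$. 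Combining this with the identity above yields $\veps(\bullet)O_\bullet=\veps(\bullet)O_\bullet+w_g$, so $\veps(\bullet)O_\bullet$ is invariant under translation by $w_g$. If $w_g\neq 0$ it would then be a union of cosets of the order-$p$ subgroup $\langle w_g\rangle$, forcing $p\mid|\veps(\bullet)O_\bullet|=\frac{q-1}{2}$; but $\gcd\bigl(p,\frac{q-1}{2}\bigr)=1$, a contradiction. Hence $w_g=0$ and $g(D)=D$. The main obstacle is precisely this last point: it is the absence of the $K$-component in $N_0$ that forces the action on $W$ to preserve squares, and this square-preservation is exactly what is needed to run the coprimality argument; had a non-square scalar been allowed, $\veps(\bullet)O_\bullet$ could be carried onto its complementary half and the translation conclusion would collapse.
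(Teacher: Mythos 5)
Your proof is correct, and it takes a genuinely different route from the paper's. The paper's argument is design-theoretic: it first uses that $N_0$ is a $p$-group acting on the blocks through $0$ (a number coprime to $p$) to produce \emph{some} $N_0$-fixed block $D+w$, shows $w\in W$ via Proposition~\ref{pointblock}(2), then passes to the Paley subdesign $\cD_W$ carried by the imprimitivity block $W$, observes that $N_0$ fixes both $O_\bullet$ and $O_\bullet+w$ as blocks of $\cD_W$, and applies Proposition~\ref{pointblock}(2) and then (1) to conclude that $N_0$ fixes the point $w$ and hence the block $D$. You bypass the orbit-counting and Proposition~\ref{pointblock} entirely: you characterize the $E$-fixed blocks as $\{D+w\mid w\in W\}$ (using $E\normaleq N_0$ and the fact that distinct translates of $D$ are distinct blocks), so $g(D)=D+w_g$ with $w_g\in W$; you then restrict to $W$, where $D\cap W=\veps(\bullet)O_\bullet$, note that any $g\in N_0\leq F_pLE$ acts on $W$ as a field automorphism (since $L$ and $E$ act trivially there), which preserves $\veps(\bullet)O_\bullet$ because field automorphisms preserve squares, and close with the translation-coprimality argument $p\nmid\frac{q-1}{2}$, forcing $w_g=0$. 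What your version buys: it is more self-contained (no appeal to the point-block lemmas or to the Paley-design structure), it makes explicit that the only property of $N_0$ needed is the containment $N_0\leq F_pLE$ rather than its being a $p$-group, and it isolates cleanly why excluding the torus $K$ is the crucial point. What the paper's version buys: it recycles the general-purpose Proposition~\ref{pointblock} already proved for other uses, and it avoids computing restrictions of group elements explicitly. Both proofs share the same decisive localization: pass to the line $W=\fix_V(E)$, identify $D\cap W$ with $\veps(\bullet)O_\bullet$, and exploit coprimality with $p$.
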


\begin{proof} First, notice that we can assume that $O_\bullet\subseteq D$ (otherwise we can replace $D$ by $-D$).

The subgroup $N_0$ permutes the blocks of $\cD$ containing $0$. The number of such blocks is $\frac{q - 1}{2}$ - coprime to $p$. Since $N_0$ is a $p$-group, it fixes at least one of these blocks, say $D+w$. It follows from $E\leq N_0$ that $D+w$ is also fixed by $E$. Since $E$ fixes $0$, $D$ and $D+w$, it fixes also $w$ (Proposition~\ref{pointblock}, part (2)). Therefore $w\in \fix_V(E)=W:= \{(x,0,0)^t\,|\,x\in \F_q\}$,

Since $N$ normalizes $EV_+$, the center ${\bf Z}(EV_+) = \bC_{E}(V_+) = W_+$ is normal in $N$. Therefore the orbits of $W_+$ (which are the cosets of $W$) form an imprimitivity system of $N$. Hence the block  $W$ is $N_0$-invariant and $N_0$ fixes $(D+w)\cap W = (D\cap W)+w = O_{\bullet}+w$ setwise. The restriction $N_0^W$ is contained in the automorphism group of the subdesign $\cD_W:=\{D'\cap W\,|\,D'\in\cD\}$ (which is a Paley design over $\F_q$). Since $N_0\leq FEL$ and $FEL O_\bullet = O_\bullet$, the subgroup $N_0$ stabilizes $O_\bullet$ setwise. Since $Q_\bullet\subset D$, both $O_\bullet$ and $O_\bullet + w$ are blocks of the design $\cD_W$ fixed by $N_0$. By Proposition~\ref{pointblock}, part (2) applied to $\cD_W$ we conclude that $w^{N_0}=w$. Now Proposition~\ref{pointblock}, part (1) yields us $D^{N_0}=D$.\qed
\end{proof}

\begin{prop}\label{function_veps}
$Q = N_0\cap (F_p L)$.
\end{prop}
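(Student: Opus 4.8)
The inclusion $Q\subseteq N_0\cap F_pL$ is immediate: $Q\leq UF_p\leq F_pL$ by definition, and $Q\leq N_0$ has already been noted. So the plan is to prove the reverse inclusion, i.e. to take an arbitrary $g\in N_0\cap F_pL$ and show that it lies in $Q$. Since $F_p$ normalizes $L$, I will write $g=f\hat\ell$ with $f\in F_p$ and $\ell\in\operatorname{End}(\F_q)$, and the whole difficulty is to show that $\ell$ is multiplication by a field element, i.e. $\hat\ell\in U$; once this is done, $g\in UF_p$ together with $gD=D$ (which holds by Proposition~\ref{fixes}) gives $g\in(UF_p)_D=Q$.

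The key idea is to track the third coordinate of a vector under $g$. Since $\hat\ell$ fixes the third coordinate and $f$ applies a field automorphism to it (permuting $\F_q^{*2}$), $g$ preserves the family $\{(a,b,s)^t:s\in\F_q^{*2}\}=\bigcup_{i\in\F_q}O_i$ of vectors whose third coordinate is a nonzero square. On this family $D$ restricts to $\bigcup_{i\in J}O_i$, and such a vector lies in $O_i$ for the unique index $\iota=a/s-(b/s)^2/2\in\F_q$. Setting $\ell':=f\ell f^{-1}$, a direct computation of $g$ on a vector of $O_i$ will give that its image has index $f(i)+\ell'(f(s))/f(s)$; so for fixed $s$ the assignment $i\mapsto f(i)+\kappa_s$, with $\kappa_s:=\ell'(f(s))/f(s)$, describes how $g$ moves orbit indices.

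The step I expect to be the main obstacle is that $g$ need not permute the $A$-orbits at all: since $\kappa_s$ depends on $s$, $g$ will in general scatter a single orbit $O_i$ across many orbits, so I cannot simply argue that $g$ normalizes $A$. To get around this I will fix the third coordinate $s$ and let $(a,b)$ vary; then $gD=D$ forces the affine bijection $i\mapsto f(i)+\kappa_s$ of $\F_q$ to preserve $J$, i.e. $f(J)+\kappa_s=J$ for every $s\in\F_q^{*2}$. Comparing two values of $s$ shows that $f(J)$ is invariant under translation by the difference of the corresponding $\kappa$'s; a nonzero such difference would exhibit $f(J)$ as a union of $\Z_p$-cosets and force $p\mid|J|=\frac{q+\mu}{2}$, contradicting $\gcd(|J|,p)=1$. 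Hence all $\kappa_s$ equal a single constant $\kappa$, so $\ell'(t)=\kappa t$ for all $t\in\F_q^{*2}$.

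Finally I will upgrade this to $\ell'=\kappa\cdot\mathrm{id}$. For this I will verify that $\F_q^{*2}$ spans $\F_q$ over $\F_p$ --- every element of $\F_q$ is a sum of two squares, since a nondegenerate binary quadratic form over a finite field is universal --- so that the $\F_p$-linear map $\ell'$, agreeing with multiplication by $\kappa$ on a spanning set, equals multiplication by $\kappa$. Conjugating back, $\ell=f^{-1}\ell'f$ is then multiplication by $f^{-1}(\kappa)\in\F_q$, whence $\hat\ell\in U$ and $g\in UF_p$. As noted above this yields $g\in Q$, establishing $N_0\cap F_pL\subseteq Q$ and completing the proof.
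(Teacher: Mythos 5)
Your proof is correct and follows essentially the same route as the paper's: from $gD=D$ (Proposition~\ref{fixes}) you compute how $g$ shifts the orbit indices, deduce $f(J)+\kappa_s=J$ for all $s\in\F_q^{*2}$, rule out a nonconstant $\kappa_s$ because $|J|=\frac{q\pm 1}{2}$ is coprime to $p$, and conclude $\hat\ell\in U$, hence $g\in(UF_p)_D=Q$. The only difference is cosmetic (working with $f\hat\ell$ and $\ell'=f\ell f^{-1}$ instead of $\hat\ell f$), and you in fact supply a detail the paper leaves implicit, namely that $\F_q^{*2}$ spans $\F_q$ over $\F_p$ so that agreement of $\ell'$ with $\kappa\cdot\mathrm{id}$ on squares forces equality everywhere.
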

\begin{proof} Since $F$ normalizes $L$, we can replace $N_0\cap (F_p L)$ by $N_0\cap (L F_p)$.
Let $\hat{\ell}f,\ell\in{\sf End}(\F_q),f\in{\sf Gal}(\F_q/\F_p)$ be an element of the intersection $N_0\cap (L F_p)$.
%Then $\veps(i)=\veps(i^\sig + (s^\sig)^{-1}\ell(s^\sigma))$ holds for all $i\in \F_q$ and $s\in\F_q^{*2}$ %(here $\veps$ is the function which defines the set $D$).
By Proposition~\ref{fixes} $\hat{\ell}fD = D$. Pick arbitrary $i\in\F_q$ and $s\in\F_q^{*2}$.
By definition of $D$ the vector $v:=\veps(i)\clmn{s\left(\frac{x^2}{2}+i\right)}{sx}{s}$ is contained in $D$.
Therefore
$$
\hat{\ell}f v\in D\iff
\veps(i)\clmn{f(s)\left(\frac{(f(x))^2}{2}+f(i)\right)+\ell(f(s))}{f(s)f(x)}{f(s)}\in D
$$
Since $\clmn{f(s)\left(\frac{(f(x))^2}{2}+f(i)\right)+\ell(f(s))}{f(s)f(x)}{f(s)}\in  O_{i'}$
 where $i' ={f(i) + f(s)^{-1}\ell(f(s))}$, we conclude that $D\cap \veps(i)O_{i'}\neq\emptyset$ .But $D\cap (O_{i'}\cup -O_{i'}) = \veps(i') O_{i'}$. Therefore
$\veps(i') = \veps(i)$ implying that
 $\veps(i)=\veps(f(i) + f(s)^{-1}\ell(f(s)))$ holds for all $i\in \F_q$ and $s\in\F_q^{*2}$. This implies that $f(J) + s^{-1}\ell(s) = J$ holds for each $s\in\F_q^{*2}$. If $s^{-1}\ell(s)\neq t^{-1}\ell(t)$ for some $t,s\in\F_q^{*2}$, then $J + u = J$ where $u = s^{-1}\ell(s) - t^{-1}\ell(t)$. In this case $J$ would be a union of $\sg{u}$-cosets which is impossible because $|J|=(q\pm 1)/2$. Therefore $s^{-1}\ell(s), s\in\F_q^{*2}$ is constant, or, in other words, $\ell(s) = \alpha s, s\in\F_q^{*2}$
for some $\alpha\in\F_q$. Since $\ell$ is $\F_p$-linear, we conclude that $\ell(x)=\alpha x, x\in \F_q$, or, equivalently, $\ell = \e{\alpha}$. Thus $\hat{\ell}\in U$ and $\hat{\ell}f\in U F_p$. Since $f(J) + \alpha = J$, Proposition~\ref{Q} implies that  $\hat{\ell}f$ stabilizes $D$ setwise, that is $\hat{\ell}f\in Q$.
\qed
\end{proof}

\medskip

\subsection{ Proof of Theorem~\ref{main2}}
Let $D_\veps,D_\delta\in\DD$ be two difference sets for which the designs $\cD_\veps: = \des(V,D_\veps),\cD_\delta:=\des(V,D_\delta)$ are isomorphic. Then there exists a permutation $g\in\sym(V)$ such that
$\cD_\delta^g = \cD_\veps$. Clearly that $G_\delta^g = G_\veps$ where $G_\delta,G_\veps$ are the automorphism groups of the corresponding designs.
This implies that $(Q_\delta E V_+)^g$ is a Sylow $p$-subgroup of $G_\veps$. Since $Q_\veps EV_+$ is also a Sylow $p$-subgroup of $G_\veps$, there exists $h\in G_\veps$ such that
$(Q_\delta E V_+)^{gh} =Q_\veps E V_+$. Hence $\bE_2((Q_\delta E V_+)^{gh}) =\bE_2(Q_\veps E V_+)$.
It follows from the definition of $\bE_2$ and $\bE$ that $\bE((Q_\delta E V_+)^{gh}) = \bE(Q_\delta E V_+)^{gh}$ and $\bE_2((Q_\delta E V_+)^{gh}) = \bE_2(Q_\delta E V_+)^{gh}$.
Applying now Proposition~\ref{E} we obtain that $V_+^{gh}=V_+$ and  $(EV_+)^{gh}=EV_+$.
%Therefore $gh\in\bN_{\sym(V)}(EV_+)\cap \bN_{\sym(V)}(V_+)$.
%Since $\bN_{\sym(V)}(V_+) = \aut(V)V_+$, we obtain $gh\in\bN_{\aut(V)V_+)(EV_+)$.
 It follows from $\fix(E)\neq\emptyset$ that $\fix(E^{gh})\neq \emptyset$. Therefore $E^{gh}\leq (EV_+)_v$ for some $v\in V$. Together with $|(EV_+)_0|=|E|=q^3$ we obtain that
$E^{gh}$ is a point stabilizer of $EV_+$. Therefore there exists $b\in EV_+$ such that $E^{ghb} = E$. Denoting
$g':=ghb$ we obtain that $E^{g'}=E,V_+^{g'}=V_+$, $\cD_\delta^{g'}=\cD_\veps^{hb}=\cD_\veps$ and $G_\delta^{g'}=G_\veps$. Thus $g'\in\bN_{\sym(V)}(V_+)\cap \bN_{\sym(V)}(E) = \bN_{\aut(V)V_+}(E)$.
The factor-group $\bN_{\aut(V)V_+}(E)V_+/V_+$ is embedded into $\bN_{\aut(V)}(E) = FKLE$. Together with $FKLE\leq \bN_{\aut(V)V_+}(E)$ we obtain that $\bN_{\aut(V)V_+}(E)=FKLE(\bN_{\aut(V)V_+}(E)\cap V_+) =
FKLE\bN_{V_+}(E)$. A direct computation shows that $\bN_{V_+}(E) = W_+$ where $W=\{(x,0,0)^t\,|\, x\in \F_q\}$. Thus $g'\in FKLEW_+$.

We claim that $S^{g'}$ and $S$ are conjugate by an element of $EW_+$. First we notice that $S^{g'}\leq FKLEW_+$ because $S\leq FKLEW_+$. The subgroup $S$ is a cyclic subgroup of $\bN_{\aut(V)V_+}(E)$ of order $\frac{q-1}{2}$ which centralizes $E$. Since $E^{g'}=E$, $(V_+)^{g'}=V_+$ and $[S,E]=1$, the subgroup $S^{g'}$  centralizes $E$ too. Thus the subgroup $T:=\sg{S,S^{g'}}$ is contained in $\bC_{FKLEW_+}(E)$.
Since $LEW_+$ centralizes $E$, we can write $\bC_{FKLEW_+}(E) = \bC_{FK}(E)LEW_+$.
A direct computation shows that $\bC_{FK}(E) = K_0$ where $K_0:=\{K(\alpha,1)\,|\,\alpha\in\F_q^*\}$.
Therefore $T\leq K_0 LEW_+$. Since $[L,E]=[E,W_+]=[L,W_+]=1$, the subgroup $LEW_+$ is  elementary abelian. The images of $S$ and $S^{g'}$ in the factor-group $K_0 LEW_+/(LEW_+)\cong K_0\cong\F_q^*$ coincide. Therefore $T = S(T\cap LEW_+)$. Since $LEW_+$-orbit of $0$ coincides with $W = W_+ 0$, we can write that $T\cap LEW\leq (T\cap LEW_+)_0 W_+$. Since $LEW_+$ centralizes $E$ and normalizes $V_+$, the subgroup
$(T\cap LEW_+)_0$
normalizes $EV_+$. Therefore $(T\cap LEW_+)_0 EV_+$
 is a $p$-subgroup of $G_\veps$.
Hence it is contained in the Sylow's $p$-subgroup $P$ of $G_\veps$.
 It follows from Theorem~\ref{sylow} and Proposition~\ref{E} that the order of a maximal elementary abelian subgroup of the point stabilizer $P_0$ is equal to $q$. The subgroup $(T\cap LEW_+)_0 E$ is contained in $P_0$ and is elementary abelian of order at least $q = |E|$. Therefore $(T\cap LEW_+)_0 E = E$ implying that $(T\cap LEW_+)_0\leq E$, and, consequently, $T\cap LEW_+\leq EW_+$.

Finally $T\leq SEW_+$ implying $S^{g'}\leq SEW_+$. Both $S$ and $S^{g'}$ are Hall $p'$-subgroups of
$SEW_+$. Therefore they are conjugate in $SEW_+$, say by an element $t$. Since $t\in SEW_+$, it is an automorphism of $\cD_\veps$ implying that $\cD_\delta^{g't} = \cD_\veps$. But the element $g't$ normalizes $E$,$S$ and $V_+$. Therefore $g't\in\bN_{\aut(V)V_+}(SE)$. Since $g't$ normalizes $SE$, it fixes $\fix_V(SE)$
setwise. But $\fix_V(SE)$ contains a unique point, namely $0$. Hence $g't$ fixes $0$ implying $g't\in \bN_{\aut(V)}(SE)$.  \qed

\section{Concluding Remarks.}

The first time I heard about Feng's result
was this August when I met Qing Xiang at Rogla's Conference (Slovenia).
Next month Bill Kantor communicated me about
this result independently, he also conjectured that the construction
proposed by Feng should also work for non-abelian groups of order $q^3$,
where $q$ is $3$ modulo $4$. Finally provided information
as well as fruitful discussions
with Bill during his stay in Israel served for me as the source of
inspiration to work on this project.
I am very grateful to Bill Kantor for his attention
and fruitful ideas expressed by him.
 The author also thanks Misha Klin for helpful phone conversations related to this project. My special thanks to  Matan Ziv-Av who made computer computations
(using GAP) for the group $\Z_7^3$ and thus confirmed and
clarified experimentally the initial idea,
which was generalized later for arbitrary $q$.

It seems that a modification of the construction given here could also work
in the case when $q$ congruent $1$ modulo $4$. In this case one should obtain
exponentially many partial difference sets with Paley parameters.


\begin{thebibliography}{99}

\bibitem{wenghu} G. Weng, L. Hu. Some results on skew Hadamard difference sets.
Des. Codes Cryptogr. (2009) 50, pp 93-105.

\bibitem{dingyuan} C. Ding, J. Yuan. A family of skew Hadamard difference sets. J. Combin. Theory (A) 113, 1526–1535 (2006).

\bibitem{dingwangxiang} C. Ding, Z. Wang, Q. Xiang. Skew Hadamard difference sets from Ree-Tits slice symplectic spreads in $PG(3, 3^{2h+1})$. J. Combin. Theory Ser. A 114, 867–887 (2007)

\bibitem{xiang} G.Weng , W. Qiu, Z. Wang, Q. Xiang .
Pseudo-Paley graphs and skew Hadamard difference
sets from presemifields
Des. Codes Cryptogr. (2007) 44:49–62

\bibitem{feng}T. Feng. Non-abelian skew Hadamard difference sets fixed by a prescribed automorphism.
J. Combin. Theory Ser. A (2010) (in press), doi:10.1016/j.jcta.2009.11.004
\end{thebibliography}
\end{document}